\newtheorem{theorem}{Theorem}
\DeclareMathOperator*{\argmin}{arg\,min}
\newsavebox\myboxA
\newsavebox\myboxB
\newlength\mylenA
\newcommand*\xoverline[2][0.75]{%
    \sbox{\myboxA}{$\m@th#2$}%
    \setbox\myboxB\null
    \ht\myboxB=\ht\myboxA%
    \dp\myboxB=\dp\myboxA%
    \wd\myboxB=#1\wd\myboxA
    \sbox\myboxB{$\m@th\overline{\copy\myboxB}$}
    \setlength\mylenA{\the\wd\myboxA}
    \addtolength\mylenA{-\the\wd\myboxB}%
    \ifdim\wd\myboxB<\wd\myboxA%
       \rlap{\hskip 0.5\mylenA\usebox\myboxB}{\usebox\myboxA}%
    \else
        \hskip -0.5\mylenA\rlap{\usebox\myboxA}{\hskip 
0.5\mylenA\usebox\myboxB}%
    \fi}
\journal{arXiv.org}
\newcommand{\TheTitle}{A Realizable Filtered Intrusive Polynomial Moment Method} 
\date{\today}
\DeclareMathOperator{\diag}{diag}
\definecolor{darkgreen}{rgb}{0,0.6,0}
\def\R{\mathbb{R}}
\def\cR{\mathcal{R}}
\def\uhat{\bm{\hat u}}
\def\cL{\mathcal{L}}
\def\cR{\mathcal{R}}
\newcommand{\vint}[1]{\langle #1 \rangle}
\newcommand{\Vint}[1]{\left\langle #1 \right\rangle}
\begin{document}

\begin{frontmatter}

\title{\TheTitle}

\author[adressGraham]{Graham Alldredge}
\author[adressMartin]{Martin Frank}
\author[adressJonas]{Jonas Kusch}
\author[adressRyan]{Ryan McClarren}

\address[adressGraham]{Berlin, gwak@posteo.net}
\address[adressMartin]{Karlsruhe Institute of Technology, Karlsruhe, 
martin.frank@kit.edu}
\address[adressJonas]{Karlsruhe Institute of Technology, Karlsruhe,
    jonas.kusch@kit.edu}
\address[adressRyan]{University of Notre Dame, Notre Dame,  rmcclarr@nd.edu}

\begin{abstract}
Intrusive uncertainty quantification methods for hyperbolic problems exhibit spurious oscillations at shocks, which leads to a significant reduction of the overall approximation quality. Furthermore, a challenging task is to preserve hyperbolicity of the gPC moment system. An intrusive method which guarantees hyperbolicity is the intrusive polynomial moment (IPM) method, which performs the gPC expansion on the entropy variables. The method, while still being subject to oscillations, requires solving a convex optimization problem in every spatial cell and every time step.

The aim of this work is to mitigate oscillations in the IPM solution by applying filters. Filters reduce oscillations by damping high order gPC coefficients. Naive filtering, however, may lead to unrealizable moments, which means that the IPM optimization problem does not have a solution and the method breaks down. In this paper, we propose and analyze two separate strategies to guarantee the existence of a solution to the IPM problem. First, we propose a filter which maintains realizability by being constructed from an underlying Fokker-Planck equation. Second, we regularize the IPM optimization problem to be able to cope with non-realizable gPC coefficients. Consequently, standard filters can be applied to the regularized IPM method. We demonstrate numerical results for the two strategies by investigating the Euler equations with uncertain shock structures in one- and two-dimensional spatial settings. We are able to show a significant reduction of spurious oscillations by the proposed filters.
\end{abstract}

\begin{keyword}
uncertainty quantification, conservation laws, hyperbolic, intrusive, entropy, 
filtering, realizability
\end{keyword}

\end{frontmatter}

\section{Introduction}
Conservation laws describe the behavior of a large number of physical applications; these laws can be found in fields as diverse  as fluid dynamics, radiation transport or plasma physics. Given the desire to use computational simulations to predict the behavior of such systems, quantifying the effect of stochastic uncertainties, such as measurement errors or modeling assumptions, is an important task. With this goal in mind we consider a general conservation law
\begin{subequations}\label{eq:hyperbolicProblem}
\begin{align}
\partial_t \bm{u}(t,\bm{x},\bm{\xi}) + 
\nabla&\cdot\bm{f}(\bm{u}(t,\bm{x},\bm{\xi})) = \bm{0} \enskip \text{ in } D, 
\\ 
\label{eq:ic}
\bm{u}(t=0,\bm{x},&\bm{\xi}) = \bm{u}_{\text{IC}}(\bm{x},\bm{\xi})
\end{align}
\end{subequations}
for the state variable $\bm u\in\mathbb{R}^m$ at time $t\in\mathbb{R}^+$ and 
spatial position $\bm{x}\in D\subseteq \mathbb{R}^d$ and also dependent on the
random variable $\bm{\xi}\in\Theta\subseteq\mathbb{R}^p$ with 
probability density function $f_{\Xi}(\bm{\xi})$.
Supplemented with adequate boundary conditions, these equations can be solved 
using various techniques \cite{mcclarren2018uncertainty}. In this work, we 
focus 
on intrusive methods for the time evolution of stochastic quantities such as 
expected value or variance. These quantities are more generally the 
moments
\begin{align}
 \int_\Theta \varphi_i(\bm{\xi}) \bm{u}(t, \bm{x}, \bm{\xi})
  f_{\Xi}(\bm{\xi}) d \bm{\xi}
\end{align}
of the random vector $\bm{u}$ with respect to the basis functions $\varphi_i$.
In the sequel we use bracket notation for integration over $\Theta$ against
$f_\Xi$:
\begin{align}
 \Vint{\cdot} := \int_\Theta \cdot f_{\Xi}(\bm{\xi}) d \bm{\xi}
\end{align}
and assume orthonormal basis functions. We use general polynomial chaos (gPC) polynomials \cite{wiener1938homogeneous,xiu2002wiener}. Hence the basis functions $\varphi_k:\Theta\rightarrow\mathbb{R}$ fulfill
\begin{align}
 \Vint{\varphi_k \varphi_{k'}} = \delta_{k, k'}
  \qquad \text{for all } k, k'.
\end{align}

For simplicity of exposition, let us start with a one-dimensional uncertainty 
($p = 1$) and consider polynomial basis functions $\varphi_i$, where $i$ 
corresponds to the degree of $\varphi_i$.
Various intrusive methods can be used to approximate the moments, and they all 
come with certain advantages and shortcomings.
Generally, intrusive methods specify a set of equations describing the time 
evolution of the moments, by first testing the original equations 
\eqref{eq:hyperbolicProblem} with a finite set of the basis functions 
$\varphi_i$, say $i\in\{0,\dots,N\}$ leading to
\begin{align}\label{eq:momentEquations}
\partial_t\Vint{\varphi_i\bm{u}} + 
\nabla&\cdot\langle\varphi_i\bm{f}(\bm{u}(t,\bm{x},\cdot))\rangle = 0
\enskip \text{ in } D,\, i \in \{0,\dots,N\}.
\end{align}
This is not a closed system of equations because, due to the nonlinearity of $f$, 
$\vint{\varphi_i\bm{f}(\bm u(t,\bm{x},\cdot))}$ in general cannot be expressed in terms 
of the moments $\{\vint{\varphi_0\bm{u}}, \dots , \vint{\varphi_i\bm{u}}\}$.
The system is closed by providing an ansatz 
$\mathcal{U}(\bm{\hat u}_0,\cdots,\bm{\hat u}_N)\simeq \bm{u}$ for the 
solution inside the flux, where $\bm{\hat u}_i \simeq \vint{\varphi_i\bm{u}}$
for each $i$.
After inserting this closure into \eqref{eq:momentEquations} and collecting 
the moments and basis functions into the matrix
$\bm{\hat u}=(\bm{\hat u}_0,\dots,\bm{\hat u}_N)^T \in \R^{(N + 1) \times m}$ 
and vector $\bm{\varphi}:=(\varphi_0,\dots,\varphi_N)^T\in\mathbb{R}^{N+1}$, we 
have the approximate system
\begin{subequations}\label{eq:momentEquationsClosed}
\begin{align}
 \partial_t\bm{\hat u} + 
  \nabla\cdot\Vint{\bm\varphi\bm{f}(\mathcal{U}(\bm{\hat u}))} &= \bm 0
 \enskip \text{ in } D, \\ \label{eq:icMoments}
 \bm{\hat u}(t=0,\bm{x})
  =& \Vint{\bm\varphi \bm{u}_{\text{IC}}(\bm{x},\cdot)}.
\end{align}
\end{subequations}
Note that a similar need to construct a closure also appears in fields such as kinetic theory, which is closely related to uncertainty quantification \cite{kusch2018intrusive}. The choice of $\mathcal{U}$ crucially affects properties of the resulting 
moment 
system. A straightforward closure choice makes use of the basis functions 
$\varphi_i$ to represent the solution:
\begin{align*}
\mathcal{U}_k = \sum_{i=0}^N \hat w_{ik} \varphi_i
 \qquad k \in \{1, \dots , m\},
\end{align*}
where $\hat w_{ik} \in \R$ are expansion coefficients which are at this 
point free to choose and $\bm{\hat w}\in\mathbb{R}^{(N + 1) \times m}$ is 
the matrix containing all the expansion coefficients.
Assume that a given moment vector belongs to a function $\bm{u}_{\text{ex}}\in\mathbb{R}^{m}$. First, we pick $\bm{\hat w}$ such that the L$^2$ distance between this function and the polynomial approximation is minimized, 
i.e., we solve
\begin{align}\label{eq:costFunctionSG}
 (\hat w_{0k}, \dots , \hat w_{Nk})
  = \argmin_{\bm{w} \in \R^{N + 1}} \mathcal{F}_k(\bm{w}),
 \qquad \text{ where }
 \mathcal{F}_k(\bm{w}) := \Vint{(\bm w^T \bm\varphi - 
  \bm{u}_{\text{ex}, k})^2},
 \qquad k \in \{1, \dots , m\}.
\end{align}
Since we are using orthonormal polynomials for basis functions, the resulting 
optimal expansion coefficients are simply the moments, i.e., 
$\bm{\hat w} \equiv \bm{\hat u}$.
The resulting ansatz, which gives the stochastic-Galerkin (SG) closure 
\cite{ghanem2003stochastic} is
\begin{align*}
\mathcal{U}_{SG}(\bm{\hat{u}}):=\bm{\hat{u}}^T\bm{\varphi}.
\end{align*}
Compared to other intrusive methods such as the intrusive polynomial moment (IPM) method \cite{poette2009uncertainty} or the Roe transformation method \cite{pettersson2014stochastic}, stochastic-Galerkin is computationally 
inexpensive, however suffers from a possible loss of hyperbolicity 
\cite{poette2009uncertainty}. This necessitates manipulating the solution to be 
able to run the method in certain settings \cite{schlachter2018hyperbolicity} or imposing a modification of the flux discretization \cite{dai2021hyperbolicity}.
Moreover, SG suffers from oscillations when the solution is not sufficiently 
smooth \cite{le2004uncertainty}, which is a common issue in hyperbolic 
conservation laws where the solution tends to form discontinuous shocks 
\cite{poette2009uncertainty}. These oscillations impair the solution quality and lead to physically incorrect solution values, which is why different strategies to mitigate oscillations have been developed. Oscillations dissolve when the number of chosen moments is increased \cite{pettersson2009numerical,offner2017stability}, which however adds computational costs. Additionaly, amplifying the numerical diffusion mitigates oscillations \cite{offner2017stability}, but at the same time leads to non-accurate solution approximations in deterministic regions. Furthermore, the use of multi-elements in the stochastic domain reduces artifacts which stem from oscillations \cite{Wan2006,Tryoen2010}. In \cite{kusch2020oscillation} multi-elements are used for the IPM method. The strategy reduces numerical artifacts in the IPM solution while leading to a significant reduction in computational costs. 

Another strategy for reducing oscillations in spectral methods is to apply a filter between time 
steps \cite{boyd2001chebyshev,hesthaven2007spectral}. Filters have for example been used in the context of kinetic theory \cite{mcclarren2010simulating,mcclarren2010robust,radice2013new,laboure2016implicit,frank2016convergence} as well as spatial discretization methods \cite{gassner2013accuracy,lundquist2020stable}. A filtered stochastic-Galerkin method has been proposed in \cite{kusch2018filtered}, yielding a significant mitigation of oscillations. Just as the classical stochastic-Galerkin method, this filtered SG method can potentially lead to non-physical solution values, leading to a failure of the method. One possible strategy to derive a filter is to replace the SG cost function \eqref{eq:costFunctionSG} by 
\begin{align}\label{eq:costFunctionfSG}
 \mathcal{F}^{(\lambda)}(\bm{w}_k; \bm{\hat u}_k)
  := \Vint{(\bm{w}_k^T \bm\varphi
  - \bm{\hat u}_k^T \bm\varphi)^2} + \lambda T(\bm{w}_k),
\end{align}
for each $k \in \{1, \dots , m\}$, where $\bm{\hat u}_k$ is the $k$-th column 
of $\bm{\hat u}$ and $T$ is a function which penalizes oscillations in the 
resulting ansatz $\bm{w}_j^T \bm\varphi$.
The parameter $\lambda$ is a user-determined filter strength.
As an example, the L$^2$ filter for uniform distributions which has been investigated in \cite{kusch2018filtered} uses
\begin{align}\label{eq:L2filter}
T(\bm{w}) = \left\langle 
\left(\cL\left(\bm{w}^T\bm{\varphi}\right)\right)^2 \right\rangle, \qquad 
\text{ with } \cL u := \frac{d}{d\xi}\left((1-\xi^2)\frac{d}{d\xi}u\right).
\end{align}
Note that for more general distributions with gPC polynomials $P_i:\Theta\rightarrow\R$, the operator $\cL$ should be chosen to be the eigenoperator the chosen polynomial basis. I.e., we have $\cL P_i = \mu_i P_i$, where $\mu_i$ are the corresponding eigenvalues. With this choice of $T$ the minimizer of
$\mathcal{F}^{(\lambda)}(\bm{w}_k; \bm{\hat u}_k)$ in 
\eqref{eq:costFunctionfSG} is given by the components
$\overline{w}_{ik} := g_i(\lambda)\hat u_{ik}$, where for uniform distributions, the filter function 
$g_i$ is
\begin{align}\label{eq:L2filtering}
g_i(\lambda):=\frac{1}{1+\lambda i^2(i+1)^2}.
\end{align}
Hence, the optimal expansion coefficients are dampened with increasing strength 
as the order $i$ increases. A more sophisticated filter function takes the form
\begin{align}\label{eq:filteredSGb}
\tilde g_i(\lambda) := h(i/N)^{\lambda \Delta t},
\end{align}
where $\Delta t$ is the time step size of the time discretization chosen to numerically solve the SG moment system \eqref{eq:momentEquationsClosed}. Compared to \eqref{eq:L2filtering}, this filter adapts with the chosen time step size to ensure that increasing or decreasing $\Delta t$ will not heavily weaken or amplify the effect of filtering. When $\Delta t\rightarrow 0$, the filtered equations tend towards a continuous limit \cite{radice2013new}.  Furthermore, a so-called filter order $\alpha$ can be introduced. This parameter aims at preserving the approximation order for sufficiently smooth functions, see e.g. \cite{boyd1996erfc}. One example for a filter of order of $\alpha$ is \begin{align}\label{eq:expFilter}
h(\zeta) = \exp\left(c\zeta^{\alpha}\right)
\end{align}
where $c = \log(\varepsilon_M)$ and $\varepsilon_M$ is the machine accuracy. This filter choice leads to the exponential filter \cite{hoskins1980representation}. Another choice, which is called the Erfc filter \cite{boyd1996erfc} uses
\begin{align}\label{eq:erfcFilter}
h(\zeta) = \frac{1}{2}\text{erfc}\left(2\alpha^{1/2}\left(|\zeta|-1/2\right)\right).
\end{align}
If we put the filter weights into the diagonal matrix
$L(\lambda) := \diag\{ \tilde g_i(\lambda) \}_{i = 0}^N$, we can write the 
solution as $\bm{\overline{\alpha}} = L(\lambda) \bm{\hat u}$.

Within the full algorithm, the filter is applied as follows.
First, a finite-volume method with forward-Euler time discretization for the 
moment equations \eqref{eq:momentEquationsClosed} with the stochastic-Galerkin 
closure is given by
\begin{align}
 \bm{\hat u}_j^{n+1} = \bm{\hat u}_j^{n} -
  \frac{\Delta t}{\Delta x}(\bm{F^*}(\bm{\hat u}_{j}^n,\bm{\hat u}_{j+1}^n)
  - \bm{F^*}(\bm{\hat u}_{j-1}^n,\bm{\hat u}_{j}^n)),
\end{align}
where $\bm{\hat u}_j^n$ is the matrix of moment components for spatial call $j$ 
and time step $n$ and $\bm{F^*}$ is the numerical flux function (see 
Section~\ref{sec:implementationSpaceTime} below for details).
The filtered method is then given by
\begin{subequations}\label{eq:filteredEquations}
\begin{align}
 \bm{\overline{u}}_j^n &= L(\lambda) \bm{\hat u}_j^n, \\
 \bm{\hat u}_j^{n+1} &= \bm{\overline{u}}_j^{n} -
 \frac{\Delta t}{\Delta x}(\bm{F^*}(\bm{\overline{u}}_{j}^n,
 \bm{\overline{u}}_{j+1}^n) - \bm{F^*}(\bm{\overline{u}}_{j-1}^n,
 \bm{\overline{u}}_{j}^n)).
\end{align}
\end{subequations}
We call this the filtered stochastic-Galerkin (fSG) method 
\cite{kusch2018filtered}, and it gives accurate, non-oscillatory results if an 
adequate filter strength is chosen. Furthermore, the ability to analytically 
determine the minimizer of \eqref{eq:costFunctionfSG} allows low computational 
costs and therefore fast runtimes.
Unfortunately, as with the original stochastic-Galerkin method, loss of 
hyperbolicity remains an issue, since the filter does not guarantee physically correct solution values. When for example taking a look at the Euler equations, the filter does not guarantee positivity of density and pressure. Hence, even though the filter tends to push the SG solution towards a physically feasible state, filtered stochastic-Galerkin can still generate physically incorrect solution values. This leads to a failure of the method, since various quantities such as the speed of sound will no longer be real numbers. One possibility of preserving physically correct solution bounds of the SG solution is to apply a hyperbolicity-preserving limiter to the SG method \cite{schlachter2018hyperbolicity}. A strategy to combine the use of such a limiter as well as filters has been presented in \cite{kusch2020oscillation}.

A generalization of stochastic-Galerkin, which can preserve hyperbolicity without the use of a limiter is 
the intrusive polynomial moment (IPM) method \cite{poette2009uncertainty}.
The main idea is to close the system by minimizing a mathematical entropy of 
the conservation law \eqref{eq:hyperbolicProblem} while fulfilling a moment 
constraint.
For a given convex entropy $s:\mathbb{R}^m\to\mathbb{R}$ for the original 
problem \eqref{eq:hyperbolicProblem}, this constrained optimization 
problem reads
\begin{align}\label{eq:primalProblem}
 \mathcal{U}(\bm{\hat u}) = \argmin_{\bm{u}} \Vint{s(\bm{u})}
  \enskip \text{ subject to }
 \bm{\hat u} = \Vint{\bm{\varphi} \bm{u}}
\end{align}
where $\bm{\varphi} \bm{u}$ is an outer product, i.e., 
$[\bm{\varphi} \bm{u}]_{ik} = \varphi_i u_k$.
Using duality, this constrained optimization problem can be transformed into the 
unconstrained finite-dimensional problem
\begin{align}\label{eq:dualProblem}
 \min_{\bm{\hat w} \in \mathbb{R}^{(N+1) \times m}}
  \left\{ \Vint{s_*(\bm{\hat w}^T \bm\varphi)}
  - \bm{\hat w} \cdot \bm{\hat u} \right\},
\end{align}
where $s_*:\mathbb{R}^m\to\mathbb{R}$ is the Legendre transformation of $s$
and
\begin{align}
 \bm{\hat w} \cdot \bm{\hat u} = \sum_{i = 0}^N \sum_{k = 1}^m
  \hat{w}_{ik} \hat{u}_{ik}.
\end{align}
If we let $\bm{\hat v}(\bm{\hat u})$ denote the minimizer of the dual 
problem \eqref{eq:dualProblem}, the minimizer of the primal problem 
\eqref{eq:primalProblem} is given by
\begin{align}\label{eq:ansatz}
 \mathcal{U}(\bm{\hat u}) = \left( s' \right)^{-1}(\bm{\hat{v}}(\bm{\hat u})^T 
\bm{\varphi}).
\end{align}
Here we use that $s_{*}'(v) = \left( s' \right)^{-1}(v)$. It is important to point out that if the inverse of $s'$ only takes values in a set $\cR_{\bm{u}}\subset\mathbb{R}^m$, the presented optimization problem is only solvable if the moments $\bm{\hat u}$ lie inside the so-called realizable set
\begin{align}\label{eq:realizableSetGeneral}
 \cR := \left\{ \uhat \in \R^{(N + 1)\times m} : 
  \exists \bm{u} = \bm{u}(\xi) \in \cR_{\bm{u}}
  \text{ for all } \xi \in \Theta \text{ such that }
  \uhat = \Vint{\bm{\varphi} \bm{u}} \right\}.
\end{align}
A detailed discussion of realizability in the context of uncertainty quantification can be found in \cite{kusch2017maximum}.

By construction, the IPM system is hyperbolic, and, depending on the entropy chosen, the ansatz 
stays within certain physically relevant bounds \cite{poette2009uncertainty}.
These may be user-specified (through the choice of $s$) bounds which enforce 
the maximum-principle or the bounds of the domain of hyperbolicity (for 
example, for the Euler equations).
While for scalar problems the solution is non-oscillatory 
\cite{kusch2017maximum}, oscillations arise for systems like the 
Euler equations \cite{kusch2018filtered,kusch2020intrusive}, where the choice of admissible 
entropies is limited.
Furthermore, since the optimization problem \eqref{eq:primalProblem} can 
generally not be solved analytically, a numerical optimization method has to be 
used whenever the closure needs to be evaluated.
This heavily increases computational costs, though these costs should be able 
to be alleviated by the use of multi-elements \cite{kusch2020oscillation}, high-order methods for the temporal-spatial 
discretization as well as by taking advantage of parallel scalability 
\cite{garrett2015optimization}. 


The objective of this paper is to is to develop a method that suppresses oscillations in the solution while guaranteeing hyperbolicity of the moment system. This goal is achieved by incorporating a filter into the IPM closure. To extend the IPM method to filters, first we simply apply the filter to the numerical solution of \eqref{eq:momentEquationsClosed} (with the IPM closure) between time steps. 
But since an important property of the optimization problem is that it is 
infeasible when the given moment vector is not realizable, we must consider two 
options: either the filter must be chosen to preserve realizability or the optimization 
problem must be modified to become feasible for nonrealizable moment vectors. We propose two methods which follow these two paths. First, we design a filter which preserves realizability of the moments. Second, we change the IPM optimization problem to ensure a unique solution even if the moments are not realizable, allowing the use of common filters.

Parts of this work have been discussed in the PhD thesis \cite{10.5445/IR/1000121168}. To allow reproducing the numerical results of this work, the source code of the IPM solver implementation is publicly available \cite{uqcreator} together with scripts which recompute all test cases from this paper. The implementation is an extension of the code framework presented in \cite{kusch2020intrusive}.

Before proceeding, we note that there are other ways to enforce the physical 
bounds of the solution and dampen oscillations in the closure.
For example, one could enforce the physical bounds through constraints in the 
optimization problem instead of using the entropy $s$ in the objective function.
Also, instead of filtering the moment vector directly, oscillations could be 
dampened by penalizing them in the objective function.
These alternatives also likely merit investigation, but we leave this for 
future work.

The rest of this paper is structured as follows:
First we discuss how applying a standard filter to a realizable moment vector 
does not necessarily preserve its realizability and propose an alternate filter 
which does preserve realizability in Section~\ref{sec:realizability}.
Then in the following Section~\ref{sec:regularization}, we apply the 
regularization technique from \cite{alldredge2018regularized} to the 
optimization problem, thus making it feasible even for nonrealizable moment 
vectors and allowing the use of standard filters.
We discuss our implementation in Sections~\ref{sec:implementation} and present numerical results in 
Section~\ref{sec:results}. Section~\ref{sec:outlook} draws conclusions and gives an outlook on future work.

\section{Filtering and realizability}
\label{sec:realizability}

We consider two ways of incorporating a filter into the IPM method.
First, we show that the filtered moments $L(\lambda)\bm{\hat u}$ are not 
guaranteed to fulfill the realizability condition (even when $\bm{\hat u}$ 
does) and therefore propose a modified filter which does preserve 
realizability.
Second, we consider an alternative which modifies the IPM optimization 
problem \eqref{eq:primalProblem} instead of the filter so that the problem is 
feasible even for non realizable moment vectors.

\subsection{Effects of filtering on realizability}
Before presenting the two novel approaches to include filters in the IPM method, we demonstrate the core problem that comes with this task. I.e., we demonstrate that the filtered moments $L(\lambda) \bm{\hat{u}}$ are not necessarily realizable, even when the moments $\bm{\hat u}$ are.
Let us consider the scalar case, $m = 1$ and the entropy
\begin{align}\label{eq:kinEntropy}
 s(u) = u \log(u).
\end{align}
Now, a solution to the IPM optimization problem \eqref{eq:primalProblem} is only available if the moment vector lies in the realizable set $\mathcal{R}$, which we defined in \eqref{eq:realizableSetGeneral}. The realizable set which corresponds to the entropy \eqref{eq:kinEntropy} is
\begin{align*}
 \cR := \left\{ \bm{\hat u} \in \mathbb{R}^{N+1}
  \vert \exists u : \Theta \to (0, \infty) \text{ such that }
  \bm{\hat u} = \Vint{\bm{\varphi} u} \right\}.
\end{align*}
When $N = 2$ we have the more explicit form
\begin{align}
 \cR = \left\{ \bm{\hat u}\in \mathbb{R}^{N+1} \,\vert\, \hat u_0 > \hat u_2 > 0
  \text{ and } \hat u_0 \hat u_2 > \hat u_1^2 \right\}.
\end{align}
We refer to \cite{borwein1991convergence,shohat1943problem,curto1991recursiveness} for more information. In Figure~\ref{fig:realizabilityDomainsL2} we see that
$L(\lambda) \bm{\hat u}$ may be not realizable even when $\bm{\hat u}$ is.
In this figure, we plot
\begin{align*}
 L(\lambda) \cR |_{\hat u_0 = 1} = \{ L(\lambda)\bm{\hat u} \,|\,
  \bm{\hat u} \in \cR \text{ and } \hat u_0 = 1\}
\end{align*}
and see that for $\lambda \in \{ 0.05, 0.1, 0.2, 0.3 \}$ we have
${L(\lambda) \cR |_{\hat u_0 = 1} \not \subset \cR |_{\hat u_0 = 1}}$, i.e., 
there are realizable moments which become nonrealizable after the filter is 
applied.

\begin{figure}[h!]
\centering
\begin{subfigure}{.5\textwidth}
  \centering
  
\includegraphics[scale=0.55]{%
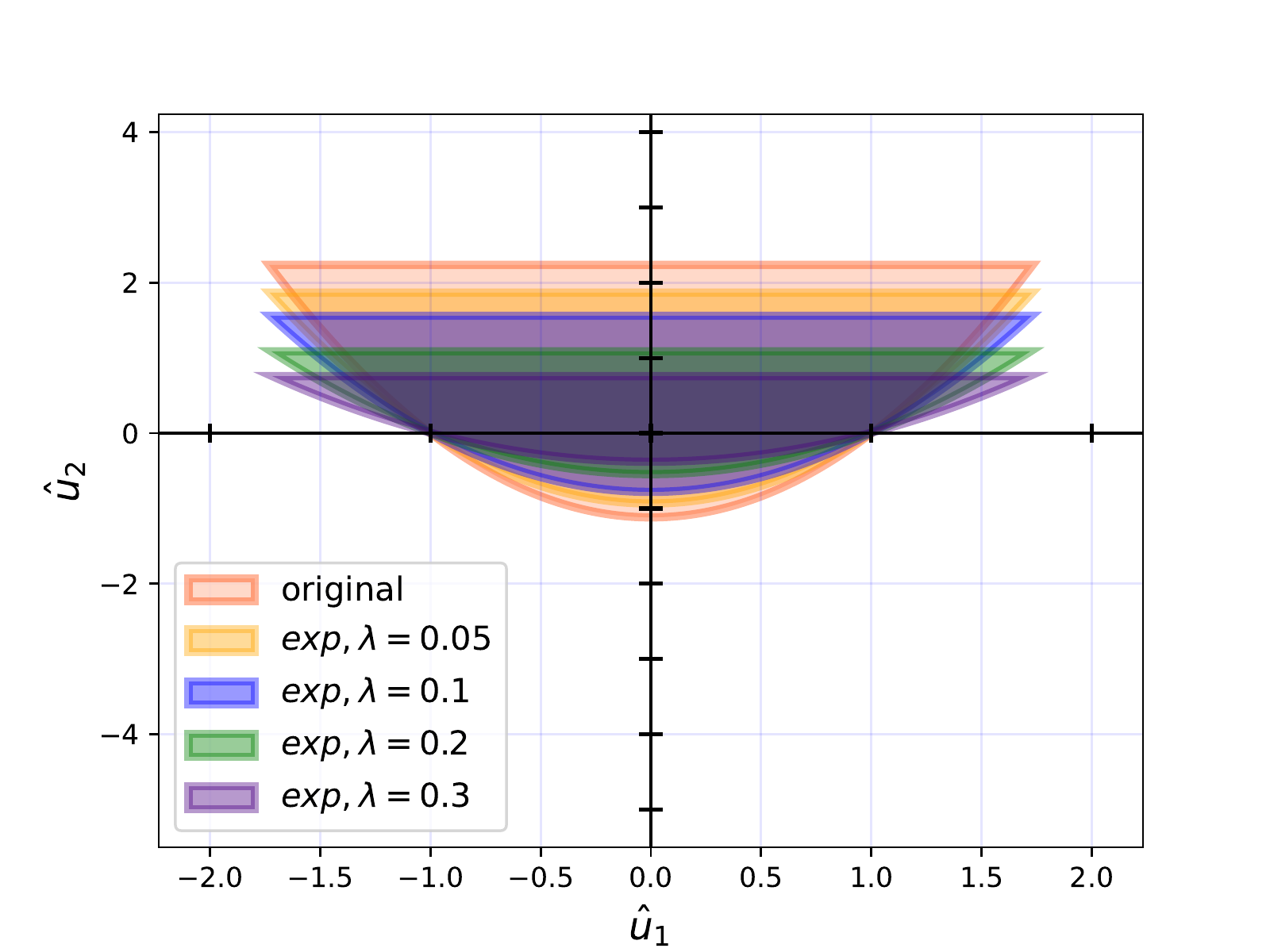}\hspace{-1.0cm}
  \caption{}
  \label{fig:realizabilityDomainsL2}
\end{subfigure}%
\begin{subfigure}{.5\textwidth}
  \centering
\includegraphics[scale=0.55]{%
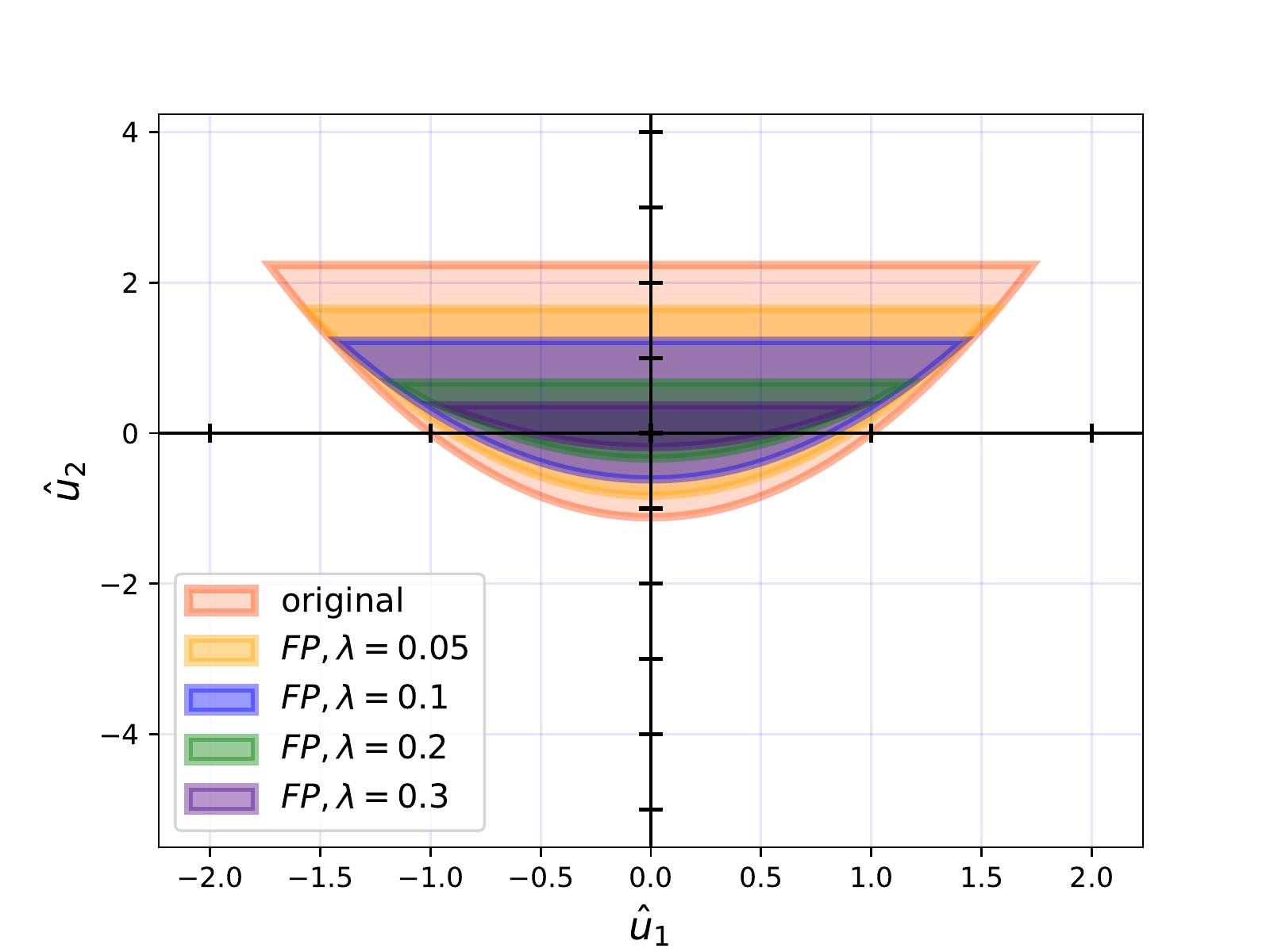}
 \vspace{0.2cm}
    \caption{}
     \label{fig:realizabilityDomainsFP}
\end{subfigure}
\caption{Images of the realizable set after application of filters. (a) Exponential 
filter with $\alpha = 7$ and $\Delta t = 0.1$, (b) Realizability-preserving Fokker-Planck filter}
\label{fig:realizabilityPlots2}
\end{figure}

On the other hand, in Figure~\ref{fig:realizabilityDomainsFP} we see the image 
of the realizable set $\cR$ after the application of a realizability-preserving 
filter, $F(\lambda)$, given below.
Here one sees that ${F(\lambda) \cR |_{\hat u_0 = 1} \subset \cR |_{\hat u_0 = 1}}$ for each  $\lambda$. In the following, we derive this realizability-preserving filter and discuss its properties.

\subsection{A realizability-preserving filter}
This section uses the Sturm-Liouville equation for orthogonal polynomials that we review in \ref{app:orthoPoly} for sake of completeness. Let us start by defining the filter for a scalar problem in the following theorem.
\begin{theorem}\label{th:RealizableFilterScalar}
Assume that the chosen gPC basis is the solution to a Sturm-Liouville equation. I.e., a self-adjoint eigenoperator $\mathcal{L}$ with eigenvalues $\mu_i$ for $i=0,\cdots,N$ exists. Furthermore, we assume that $\partial_t u = \mathcal{L}u$ fulfills a maximum principle. Let us define the filter function
\begin{align}\label{eq:FPFilter}
     g_{\lambda}(i) = \exp(\mu_i\lambda).
\end{align}
Then, making use of the corresponding filter matrix which takes the form $F(\lambda) = \diag\{ \exp(\mu_i\lambda) \}_{i = 0}^N$, the filtered moment vector
$F(\lambda)\bm{\hat u} = \vint{\bm{\varphi} u(\lambda, \cdot)}$ lies in the realizable set
\begin{align*}
 \cR := \left\{ \bm{\hat u} \in \mathbb{R}^{N+1}
  \vert \exists u : \Theta \to (u_-, u_+) \text{ such that }
  \bm{\hat u} = \Vint{\bm{\varphi} u} \right\},
\end{align*}
for scalar solutions (i.e., $m=1$) if the original moment vector $\bm{\hat u}$ is realizable.
\end{theorem}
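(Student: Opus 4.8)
The plan is to exhibit the filter matrix $F(\lambda)$ as the solution operator, evaluated at ``time'' $\lambda$, of the Sturm--Liouville evolution equation $\partial_\tau w = \cL w$, and then to use the assumed maximum principle to control the range of the evolved state. Let $\bm{\hat u}\in\cR$ be realizable, so that there is a density $u:\Theta\to(u_-,u_+)$ with $\hat u_i = \Vint{\varphi_i u}$ for $i=0,\dots,N$. I would consider the Cauchy problem $\partial_\tau w = \cL w$ for $\tau\ge 0$ with initial datum $w(0,\cdot)=u$, and take $u(\lambda,\cdot):=w(\lambda,\cdot)$ as the candidate density appearing in the theorem. The claim then reduces to two assertions: (i) the first $N+1$ moments of $w(\lambda,\cdot)$ are exactly the components of $F(\lambda)\bm{\hat u}$, and (ii) $w(\lambda,\cdot)$ again takes values in $(u_-,u_+)$.

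For (i) I would use the spectral decomposition from \ref{app:orthoPoly}: since $u$ is bounded and $f_\Xi\,d\bm\xi$ is a probability measure, $u\in L^2(\Theta,f_\Xi)$, and the eigenfunctions $\{\varphi_i\}_{i\ge 0}$ of the self-adjoint operator $\cL$ form a complete orthonormal system, so that $u=\sum_{i\ge 0} c_i\varphi_i$ with $c_i=\Vint{\varphi_i u}$, and in particular $c_i=\hat u_i$ for $i\le N$. Because $\cL\varphi_i=\mu_i\varphi_i$, the solution of the evolution equation is $w(\tau,\cdot)=\sum_{i\ge 0} c_i e^{\mu_i\tau}\varphi_i$; testing against $\varphi_j$ for $j\le N$ and using orthonormality yields $\Vint{\varphi_j w(\lambda,\cdot)} = c_j e^{\mu_j\lambda} = e^{\mu_j\lambda}\hat u_j = [F(\lambda)\bm{\hat u}]_j$, which matches the filter function \eqref{eq:FPFilter}. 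For (ii) I would invoke the hypothesis that $\partial_t u = \cL u$ obeys a maximum principle: since $w(0,\cdot)=u$ maps into the interval $(u_-,u_+)$, the (strong) maximum principle forces $w(\tau,\cdot)$ to stay in $(u_-,u_+)$ for every $\tau\ge 0$, in particular at $\tau=\lambda$. Combining (i) and (ii), $w(\lambda,\cdot)$ is an admissible density realizing $F(\lambda)\bm{\hat u}$, hence $F(\lambda)\bm{\hat u}\in\cR$.

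The step I expect to require the most care is the well-posedness of the parabolic problem $\partial_\tau w = \cL w$ for a merely bounded, possibly non-smooth initial density, together with a rigorous justification that the termwise series $\sum_{i\ge0} c_i e^{\mu_i\tau}\varphi_i$ is the solution and that the assumed maximum principle genuinely applies to it. Since the Sturm--Liouville eigenvalues satisfy $\mu_i\le 0$, the operators $e^{\tau\cL}$ defined by this series form a strongly continuous contraction semigroup on $L^2(\Theta,f_\Xi)$ with instantaneous smoothing for $\tau>0$, which makes $w(\tau,\cdot)$ classical and legitimizes the termwise moment computation; to sidestep any remaining regularity or boundary subtlety (e.g.\ non-attainment of $u_-$ or $u_+$ by $u$), one can approximate $u$ in $L^2$ by smooth densities valued in a compact subinterval of $(u_-,u_+)$, apply (i) and (ii) to each, and pass to the limit. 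The remaining ingredients---orthonormality, the eigenrelation $\cL\varphi_i=\mu_i\varphi_i$, and the maximum principle itself---are either supplied by \ref{app:orthoPoly} or assumed outright, so no further obstacle arises.
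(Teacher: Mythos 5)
Your proposal is correct and takes essentially the same route as the paper: evolve the realizing density under the Fokker--Planck flow $\partial_\lambda u = \cL u$, invoke the assumed maximum principle to keep the evolved density in $(u_-,u_+)$, and identify its first $N+1$ moments with $F(\lambda)\bm{\hat u}$. The only minor difference is that the paper obtains the moment formula by differentiating $\Vint{\varphi_i u}$ and using self-adjointness, $\Vint{\varphi_i \cL u} = \Vint{(\cL\varphi_i)u} = \mu_i \Vint{\varphi_i u}$, rather than your termwise spectral expansion, which additionally relies on completeness of the eigenbasis.
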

\begin{proof}
The realizable moments, which we wish to filter can be written as $\bm{\hat u} = \vint{\bm{\varphi}u_{\text{ex}}}$, where $u_{\text{ex}}(\xi) \in (u_-, u_+)$. Consider the Fokker--Planck equation for $u = u(\lambda, \xi)$, where $\lambda$ plays the role of time:
\begin{align*}
 \partial_\lambda u = \cL u
  \qquad \text{ with } \qquad
 u(\lambda=0,\xi) = u_{\text{ex}}(\xi).
\end{align*}
The operator $\cL$ must be the eigenoperator (or Sturm-Liouville operator) to the chosen gPC basis. Since the solution to the Fokker--Planck equation fulfills the maximum principle and $u_{\text{ex}}(\xi) \in (u_-, u_+)$ for all $\xi \in \Theta$, we know $u(\lambda, \xi) \in (u_-, u_+)$ for all $\xi \in \Theta$ and $\lambda \ge 0$.
Therefore the moments
\begin{align}
 \Vint{\bm{\varphi} u(\lambda, \cdot)}
\end{align}
remain in the realizable set $\cR$ for all $\lambda \in [0, \infty)$. Furthermore, since $\cL$ is self-adjoint, these moments are straightforward to compute:
\begin{align}
 \partial_\lambda \Vint{\varphi_i u} = \Vint{\varphi_i \cL u}
  = \Vint{(\cL \varphi_i) u}
  = \mu_i \Vint{\varphi_i u},
\end{align}
and thus
\begin{align}
 \Vint{\varphi_i u(\lambda, \cdot)} =  \exp(\mu_i\lambda)
  \Vint{\varphi_iu_{\text{ex}}}.
\end{align}
Altogether with $\bm{\hat u} = \vint{\bm{\varphi}u_{\text{ex}}}$ and the definition of $F(\lambda) = \diag\{ \exp(\mu_i\lambda) \}_{i = 0}^N$, 
we can write the filtered moment vector as
$F(\lambda)\bm{\hat u} = \vint{\bm{\varphi} u(\lambda, \cdot)}$. The realizability of these moments is ensured by the 
maximum-principle satisfied by $u(\lambda, \xi)$.
\end{proof}
When using uniform distributions, the chosen gPC basis are Legendre polynomials for which all assumptions of Theorem~\ref{th:RealizableFilterScalar} hold. In this case, the eigenvalues are given by $\mu_i = -i(i+1)$, i.e. the filter matrix reads $F(\lambda) = \diag\{ \exp(-i(i+1)\lambda) \}_{i = 0}^N$. Note that since $\mu_0=0$, the filtered zeroth order moment $\vint{\varphi_0 u(\lambda, \cdot) }$ is constant 
in $\lambda$ and furthermore, as $\lambda \to \infty$, the solution
$u(\lambda, \xi)$ becomes constant in $\xi$.
Thus $\lambda$ indeed acts like a filter-strength parameter. 

Next we discuss how to extend this realizability-preserving filter to systems 
of conservation laws. First we define realizability more precisely for systems.
Let $\cR_{\bm{u}} \subseteq \R^m$ be the set of admissible states of the system.
For example, above in the scalar case we took $\cR_{\bm{u}} = (0, \infty)$ 
(although we could have just as easily taken
$\cR_{\bm{u}} = (u_{\min}, u_{\max})$, where $u_{\min}$ and $u_{\max}$ bound 
the initial condition, in order to enforce the maximum principle).
In the case of the Euler equations, one would take
\begin{align}
 \cR_{\bm{u}} = \left\{ (\rho, \rho \bm v, \rho E)
   \in \R^{d + 2} : \rho > 0, p > 0 \right\}
  = \left\{ (\rho, \rho \bm v, \rho E)
   \in \R^{d + 2} : \rho > 0, E > \frac12\Vert \bm v\Vert^2 \right\}
\end{align}
which is the set of $\bm{u}$ for which the Euler equations are hyperbolic. Here, $\rho\in\mathbb{R}$ denotes the density of the gas, $\bm{v}\in\mathbb{R}^d$ denotes the macroscopic gas velocity, $p\in\mathbb{R}$ is pressure and $ \rho E\in\mathbb{R}$ is total energy.
For a solution state $\bm{u}$ which depends on the uncertainty $\xi$, let
$\uhat = (\uhat_0, \uhat_1, \dots , \uhat_N)$ be the vector of moments with 
respect to the basis functions $\bm{\varphi}$.
We call $\uhat$ realizable when it belongs to the set
\begin{align}
 \cR := \left\{ \uhat \in \R^{(N + 1)\times m} : 
  \exists \bm{u} = \bm{u}(\xi) \in \cR_{\bm{u}}
  \text{ for all } \xi \in \Theta \text{ such that }
  \uhat = \Vint{\bm{\varphi} \bm{u}} \right\}.
\end{align}
We assume that the underlying admissible set has the form
\begin{align}
 \cR_{\bm{u}} := \left\{ \bm{u} \in \R^m : \exists f = f(v) \in I \subset \R
  \text{ for all } v \in V \subseteq \R^d \text{ such that }
  \bm{u} = \int_V \bm{\psi}(v) f(v) \, dv \right\}
\end{align}
where $\bm{\psi}$ is a given set of $m$ basis functions in $v$, and $I$ is 
usually $(0, \infty)$ or $(0, 1)$. For conservation laws derived from kinetic equations, $v$ plays the 
role of the velocity variable from the underlying kinetic equation.
This is the case for the Euler equations (where the $f$ is the Maxwellian,
$\bm{\psi}(v) = (1, v, \frac12|v|^2)$, and $I = (0, \infty)$) and more 
generally for entropy-based moment closures for kinetic equations 
\cite{levermore1996moment}.
Under this assumption, $\cR$ can be written as
\begin{align}\label{eq:Rmom}
 \cR = \Big\{ \uhat \in \R^{(N + 1)\times m} : \exists f =& f(\xi, v) \in I
   \text{ for all } (\xi, v) \in \Theta \times V \vphantom{\int_V} \nonumber\\
  &\text{ such that } 
  \uhat = \Vint{ \bm{\varphi}\int_V \bm{\psi}(v)^T f(\cdot, v) \, dv}^T
  \Big\}.
\end{align}

Now, since we have defined realizability for systems, let us discuss how the derived filter behaves in this case:
\begin{theorem}
Assume that all assumptions from Theorem~\ref{th:RealizableFilterScalar} hold. Applying the filter $F(\lambda)$ from Theorem~\ref{th:RealizableFilterScalar} component-wise for 
systems of conservation laws preserves realizability. Hence, if $\uhat_i\in\mathbb{R}^m$ collects the moments of order $i$ for all states $k = 1,\cdots,m$, the filtered moments $e^{\mu_i\lambda} \uhat_i$ for $i=0,\cdots,N$ remain realizable.
\end{theorem}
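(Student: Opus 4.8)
The plan is to lift the argument of Theorem~\ref{th:RealizableFilterScalar} to the kinetic level, using the characterization \eqref{eq:Rmom} of the system realizable set $\cR$. Given a realizable $\uhat \in \cR$, fix a kinetic density $f = f(\xi, v) \in I$ for all $(\xi, v) \in \Theta \times V$ with $\uhat = \Vint{\bm{\varphi}\int_V \bm{\psi}(v)^T f(\cdot, v)\,dv}^T$. For each \emph{frozen} $v \in V$, consider the Fokker--Planck evolution in the filter parameter $\lambda$,
\[
 \partial_\lambda f(\lambda, \xi, v) = \cL f(\lambda, \xi, v),
 \qquad f(0, \xi, v) = f(\xi, v),
\]
where $\cL$ is the self-adjoint eigenoperator of the gPC basis acting on the uncertainty variable $\xi$ only. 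Since all assumptions of Theorem~\ref{th:RealizableFilterScalar} hold, $\partial_\lambda u = \cL u$ satisfies a maximum principle; applying it for each fixed $v$ keeps $f(\lambda, \xi, v) \in I$ for all $\xi \in \Theta$, $v \in V$ and $\lambda \ge 0$, because $I$ is an interval.

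Next I would repeat, pointwise in $v$, the moment computation from the scalar proof. By self-adjointness of $\cL$,
\[
 \partial_\lambda \Vint{\varphi_i f(\lambda, \cdot, v)}
  = \Vint{(\cL \varphi_i) f(\lambda, \cdot, v)}
  = \mu_i \Vint{\varphi_i f(\lambda, \cdot, v)},
\]
so that $\Vint{\varphi_i f(\lambda, \cdot, v)} = e^{\mu_i \lambda}\Vint{\varphi_i f(\cdot, v)}$ for every $v$. Integrating against $\bm{\psi}(v)$ over $V$ and pulling the scalar, $v$-independent factor $e^{\mu_i\lambda}$ out of the integral by linearity gives
\[
 \int_V \bm{\psi}(v)^T \Vint{\varphi_i f(\lambda, \cdot, v)} \, dv
  = e^{\mu_i \lambda}\int_V \bm{\psi}(v)^T \Vint{\varphi_i f(\cdot, v)}\,dv,
\]
for $i = 0, \dots, N$, and the right-hand side is $e^{\mu_i\lambda}$ times the $i$-th moment block of $\uhat$ (in the notation of \eqref{eq:Rmom}). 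Hence $f(\lambda, \cdot, \cdot)$ is a kinetic density, taking values in $I$, whose moments are exactly $\bigl(e^{\mu_0\lambda}\uhat_0, \dots, e^{\mu_N\lambda}\uhat_N\bigr)$; by \eqref{eq:Rmom} this block lies in $\cR$, which is the claim.

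The only genuinely new point compared with the scalar theorem — and the step I would treat most carefully — is that the Fokker--Planck flow decouples in $v$: the operator $\cL$ acts on $\xi$ alone, so applying the filter ``component-wise'' is the same as running one independent scalar maximum-principle-preserving evolution on each velocity slice $f(\cdot, \cdot, v)$, and it is the linearity of this flow that lets the subsequent $v$-integration against $\bm{\psi}$ commute with the filter. Everything else is inherited: self-adjointness produces the exponential multiplier $e^{\mu_i\lambda}$ exactly as before, and the apparently non-convex system admissible set $\cR_{\bm u}$ (e.g. $\rho>0,\ p>0$ for Euler) is reduced, via the kinetic representation, to the single scalar interval constraint $f \in I$ for which the scalar argument applies verbatim. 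As in the scalar case, $\mu_0 = 0$ leaves $\uhat_0$ (hence the physical mass) unchanged, so $F(\lambda)$ indeed acts only on the higher-order blocks.
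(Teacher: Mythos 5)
Your proof is correct and follows essentially the same route as the paper: lift the realizable moment vector to a kinetic density $f(\xi,v)\in I$ via \eqref{eq:Rmom}, evolve it in the filter parameter $\lambda$ by the Fokker--Planck equation acting on $\xi$ only (with $v$ as a passive parameter), invoke the maximum principle to keep the density in $I$, and use self-adjointness of $\cL$ to identify the evolved moments with $e^{\mu_i\lambda}\uhat_i$. The only cosmetic difference is that you perform the moment computation pointwise in $v$ before integrating against $\bm{\psi}$, whereas the paper differentiates the fully integrated moments directly; the substance is identical.
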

\begin{proof}
To filter a $\uhat \in \cR$ without destroying its realizability, let $f$ 
be any distribution from \eqref{eq:Rmom} and then apply $\cL$ to its 
$\xi$-dependence to define $g$:
\begin{align}\label{eq:f-filtered}
 \partial_{\lambda} g(\lambda, \xi, v) = \cL g(\lambda, \xi, v),
  \qquad
 g(0, \xi, v) = f(\xi, v).
\end{align}
Again, the maximum principle for $\cL$ ensures $g(\lambda, \xi, v) \in I$ for all
${(\lambda, v, \xi) \in [0, \infty) \times V \times \Theta}$.
Thus
\begin{align}
 \uhat(\lambda) := \Vint{ \int_V \bm{\psi} g(\lambda, \cdot, v) \, dv \bm{\varphi}^T}^T
  \in \cR
\end{align}
for all $\lambda \in [0, \infty)$.
A practical expression for $\uhat$ is also straightforward to derive, since
\begin{subequations}
\begin{align*}
 \partial_{\lambda} \uhat_i &= \Vint{\varphi_i \int_V \bm{\psi}
   \partial_{\lambda} g(\lambda, \cdot, v) \, dv} \\
   &= \Vint{\varphi_i \int_V \bm{\psi} \cL g(\lambda, \cdot, v) \, dv} \\
   &= \Vint{\cL\varphi_i \int_V \bm{\psi} g(\lambda, \cdot, v) \, dv} \\
   &= \mu_i \Vint{\varphi_i
    \int_V \bm{\psi} g(\lambda, \cdot, v) \, dv} \\
   &= \mu_i \uhat_i,
\end{align*}
\end{subequations}
i.e., $\uhat_i(\lambda) = e^{\mu_i\lambda} \uhat_i$.
(This also confirms that $\uhat_i(\lambda)$ is independent of the choice of $f$ in
\eqref{eq:f-filtered}.)
Thus we see that we can simply apply the filter $F(\lambda)$ component-wise for 
systems of conservation laws without destroying realizability.
\end{proof}

Note that the Fokker--Planck filter function \eqref{eq:FPFilter} is closely related to the exponential filter \eqref{eq:expFilter} being applied at order $\alpha=2$. Unfortunately, in uncertainty quantification high order filters appear to be a desirable choice, since they commonly yield satisfactory variance approximations, whereas low order filters heavily dampen the variance \cite{kusch2020oscillation}. Therefore, we now turn to allowing the use of standard filters (with potentially high order) by enabling the dual problem to cope with non-realizable moment vectors.

\subsection{Regularization}
\label{sec:regularization}

When faced with the potential infeasibility of the filtered moments, instead of 
modifying the filter to preserve realizability, we can instead modify the 
optimization problem to guarantee feasibility even for nonrealizable moment 
vectors.

A modification which achieves this is the regularization proposed in
\cite{Decarreau-Hilhorst-Lemarichal-Navaza-1992,alldredge2018regularized}, in
which the equality constraints are exchanged with a penalty term in the 
objective function.
Specifically, the primal optimization problem \eqref{eq:primalProblem} defining 
the ansatz is replaced by
\begin{align}\label{eq:primalRegularizedProblem}
 \mathcal{U}_{\eta}(\bm{\hat u}) = \argmin_{\bm{u}}\left\{ \Vint{s(\bm{u})} 
  + \frac{1}{2\eta} \left\|\bm{\hat u} - \Vint{\bm{\varphi} \bm{u}} \right\|^2\right\},
\end{align}
where $\eta \in (0, \infty)$ is the regularization parameter and
\begin{align}
 \|\bm{\hat w}\|^2 = \sum_{i = 0}^N \sum_{k = 1}^m \hat w_{ik}^2.
\end{align}
This regularized problem is feasible for a larger set of moment vectors
$\bm{\hat u}$, in particular when $\Theta$ is compact, it is feasible for all 
moment vectors.
 
Just as with the equality-constrained problem, the dual problem to 
\eqref{eq:primalRegularizedProblem} is finite dimensional:
\begin{align}\label{eq:dualRegularized}
 \bm{\hat v}_{\eta}(\bm{\hat u})
  := \argmin_{\bm{\hat v} \in \mathbb{R}^{m \times (N + 1)}} 
  \left\{\Vint{s_*(\bm{\hat v}^T \bm\varphi)}
  - \bm{\hat v} \cdot \bm{\hat u} + \frac{\eta}{2} \Vert \bm{\hat v} \Vert^2\right\},
\end{align}
and the form of the primal minimizer is the same:
\begin{align}
  \mathcal{U}_{\eta}(\bm{\hat u})
   = u_{ME}\left(\bm{\hat v}_{\eta}(\bm{\hat u})^T \bm{\varphi}\right),
\end{align}
and the flux for the resulting system is
\begin{align}\label{eq:regularizedFlux}
 \bm{f}(\bm{\hat u}) = \Vint{\bm{\varphi}
  \bm{\hat f}(\mathcal{U}_{\eta}(\bm{\hat u}))}.
\end{align}
This gives a hyperbolic system of conservation laws, and, when
$\bm{\hat v}_\eta$ is defined for all $\bm{\hat u}$, we can implement
\eqref{eq:filteredEquations} directly with a numerical flux $\bm{F^*}$ based
on \eqref{eq:regularizedFlux}.

One remaining question is how to choose the regularization parameter $\eta$. A discussion of this issue can be found in \cite{alldredge2018regularized}, where the authors choose $\eta$ according to the Morozov discrepancy principle, when interpreting the numerical errors as noise. Hence, $\eta$ is chosen such that the effect of regularization does not dominate the numerical error.

Before discussing the space and time discretization as well as implementation of the IPM scheme, let us take a look at the Fokker--Planck and exponential filter functions, which are given in equations \eqref{eq:FPFilter} and \eqref{eq:expFilter}. To get a better impression of how both filters dampen moments, we plot both filter functions in Figure~\ref{fig:filterFunction}. The chosen filter strength is $\lambda=0.04$. The time step size $\Delta t$ for the exponential filter is chosen such that the exponential filter at order two resembles the Fokker--Planck filter. It can be seen that Fokker--Planck filtering (with a scaled filter strength) behaves like aa second order exponential filter. When increasing the order of the exponential filter, the moment order $i$ at which the filter starts to dampen gPC coefficients increases. Note that this will especially affect the variance approximation, which (for a polynomial solution ansatz) is given by a sum over the squares of moments higher than zero.

\begin{figure}[h!]
    \centering
    \includegraphics[width=\linewidth]{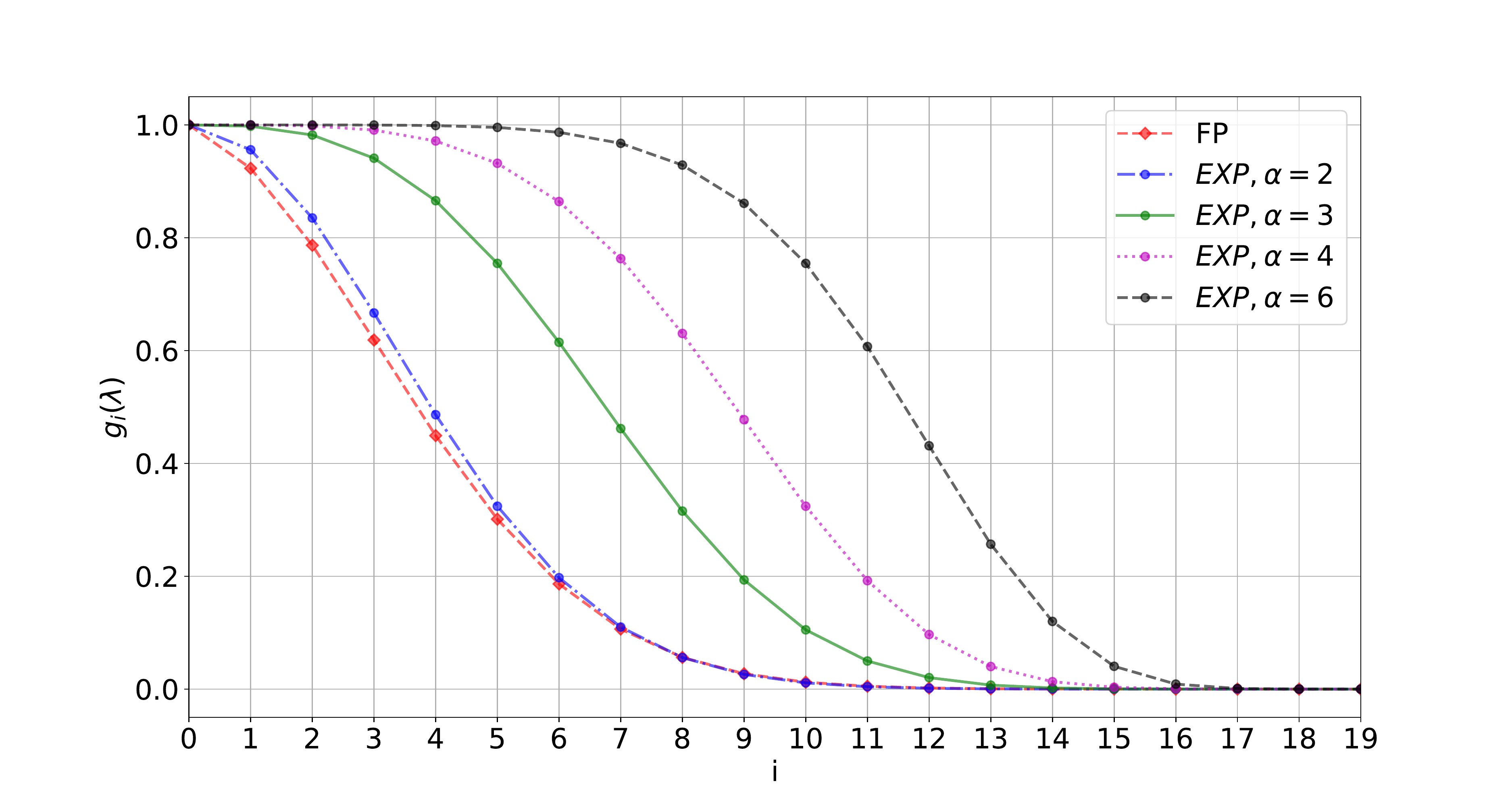}
    \caption{Fokker--Planck (FP) and exponential (EXP) filter functions for filter strength $\lambda=0.04$. The exponential filter uses a time step parameter of $\Delta t = 12.5$ and varying orders $\alpha$. The time step size is chosen such that the EXP filter for order $2$ resembles FP filtering.}
    \label{fig:filterFunction}
\end{figure}

\section{Implementation}\label{sec:implementation}

In this section we discuss the implementation of our numerical scheme for 
solving the filtered system \eqref{eq:filteredEquations}.


\subsection{Solving the dual problem}
The IPM method relies on solving the constrained optimization problem 
\eqref{eq:primalProblem}, or in the regularized case the unconstrained problem 
\eqref{eq:primalRegularizedProblem}.
Both of these problems are infinite-dimensional, so we solve their dual 
problems \eqref{eq:dualProblem} and \eqref{eq:dualRegularized}, which are 
finite-dimensional and unconstrained.
In both cases we used Newton's method stabilized with the standard backtracking 
linesearch.
Our stopping criterion is the norm of the dual, i.e.,
\begin{align}
 \|\Vint{\bm{\varphi} s'_*(\bm{\hat v}^T \bm{\varphi})} - \bm{\hat u} \| < \tau
  \qquad \text{or} \qquad
 \|\Vint{\bm{\varphi} s'_*(\bm{\hat v}^T \bm{\varphi})} + \eta \bm{\hat v}
  - \bm{\hat u} \| < \tau
\end{align}
for the original and regularized cases respectively, where
$\tau \in (0, \infty)$ is the user-specified tolerance parameter.
The integrals needed to compute the dual objective function and its derivatives 
are computed with quadrature.

The main challenge in the implementation of the IPM method (and minimal entropy 
methods in general) is that the optimization problem is ill conditioned when 
the moment vector lies near the boundary of the realizable set (or in the 
regularized case, when the regularization parameter is small and the moment 
vector of the solution lies close to the boundary of the realizable set).
In these cases the optimization algorithm needs many iterations to converge or 
may be unable to find a solution satisfying the stopping criterion. More information on this issue can for example be found in \cite{kusch2017maximum} for IPM and \cite{alldredge2012high} for minimal entropy methods in kinetic theory.

\subsection{Spatial and temporal dicsretization}\label{sec:implementationSpaceTime}
Let us now write down the implementation of the regularized IPM method 
including 
filters. In order to discretize the closed moment system 
\eqref{eq:momentEquationsClosed}, we choose a spatial grid $x_1,\cdots,x_{N_x}$ 
and a time grid $0=t_0,\cdots,t_{N_t}=t_{\text{end}}$. We then represent the 
dicretized solution by
\begin{align}\label{eq:discreteMoments}
\hat u_{ij}^n \simeq \frac{1}{\Delta x}\int_{x_{j-1/ 2}}^{x_{j+1/ 2}}\hat 
u_{i}(t_n,x) dx,
\end{align}
where $i$ is the moment order and $j$ is the spatial cell index.
The moment vector is collected in
${\bm{\hat u}_j^n = (\hat u_{ij}^n)_{i = 0}^N}$ for every 
spatial cell $j$ and time step $n$.
The corresponding dual state is denoted by
\begin{align*}
\bm{\hat v}_j^n := \bm{\hat v}(\bm{\hat u}_j^n),
\end{align*}
when reguarlization is not used; when regularization is used, $\bm{\hat v}$ is 
replaced with $\bm{\hat v}_\eta$
(see \eqref{eq:dualRegularized}).
Given a numerical flux $\bm{f^*}(\bm{u}_{\ell},\bm{u}_r)$ for the deterministic 
system, a flux for the moment system is constructed by
\begin{align}\label{eq:numFlux}
 \bm{F^*}(\bm{\hat u}(\bm{\hat v}_\ell), \bm{\hat u}(\bm{\hat v}_r)) := 
  \bm{G^*}(\bm{\hat v}_\ell, \bm{\hat v}_r) :=
  \left\langle \bm{\varphi} \bm{f^*}(s'_*((\bm{\hat v}_\ell)^T \bm{\varphi}),
  s'_*((\bm{\hat v}_r)^T \bm{\varphi})) \right\rangle.
\end{align}
We give this flux in terms of dual variables, since this more closely reflects the implementation. To compute the integrals in this expression one commonly uses a Gauss--Legendre quadrature rule, which equals the quadrature choice of the dual optimizer. This choice of the numerical flux is common in kinetic theory, where it is called the kinetic flux \cite{deshpande1986kinetic,harten1983upstream,perthame1990boltzmann,perthame1992second}. It is frequently used in the context of uncertainty quantification, see e.g. \cite{kusch2018filtered,kusch2020intrusive,kusch2017maximum,kusch2019adaptive,schlachter2018hyperbolicity,xiao2020stochastic}. 

The basic strategy of the algorithm, as given in \eqref{eq:filteredEquations},
is, in every time step, first to apply a filter to the moment vector in each 
spatial cell, and then perform an update in time starting from these filtered 
moment vectors.
Some details of the implementation vary depending on which method we choose, so 
we present the methods separately in Algorithms \ref{alg:fIPM-real} and
\ref{alg:fIPM-reg}.

In Algorithm \ref{alg:fIPM-real}, we note that after the dual variables 
are computed numerically for the filtered moment vector (using the 
Fokker--Planck filter), we compute in Line \ref{line:recompute} the moment 
vector associated with these approximte dual variables.
This is done to ensure the realizability of the moment vector at the next time 
step, $\bm{\hat u}_j^{n + 1}$, see \cite{kusch2017maximum}.
In Algorithm \ref{alg:fIPM-reg}, this step is unnecessary.

\begin{algorithm}[H]
\begin{algorithmic}[1]
\STATE $\bm{\hat u}^{0}_j \leftarrow \text{setup Initial Conditions}$ for all 
cells $j$
\STATE choose filter strength $\lambda \in (0, \infty)$
\FOR{$n=0$ to $N_t$}

\FOR{$j = 1$ to $N_x$}
\STATE $\bm{\overline{u}}_j^n \gets F(\lambda) \bm{\hat u}_j^n$
\STATE $\bm{\widetilde v}_j^n \gets \bm{\hat v}(\bm{\bar u}_{j}^n)$ 
using Newton's method with gradient tolerance $\tau$
\STATE $\bm{\widetilde{u}}_j^n \gets 
 \Vint{\bm{\varphi} s'_*((\bm{\widetilde v}_j^n)^T \bm{\varphi})}$  
 \label{line:recompute}
\ENDFOR
\FOR{$j=1$ to $N_x$}
\STATE $ \bm{\hat u}_j^{n+1} \leftarrow \bm{\widetilde{u}}_j^{n}
 - \frac{\Delta t}{\Delta x}(\bm{G^*}(\bm{\widetilde v}_j^n,
 \bm{\widetilde v}_{j+1}^n)
 - \bm{G^*}(\bm{\widetilde v}_{j-1}^n, \bm{\widetilde v}_j^n))$ 
\ENDFOR

\ENDFOR
\end{algorithmic}
\caption{Realizable Filtered IPM Method}
\label{alg:fIPM-real}
\end{algorithm}

\begin{algorithm}[H]
\begin{algorithmic}[1]
\STATE $\bm{\hat u}^{0}_j \leftarrow \text{setup Initial Conditions}$ for all 
cells $j$
\STATE choose filter strength $\lambda \in (0, \infty)$ and regularization 
parameter $\eta \in (0, \infty)$
\FOR{$n=0$ to $N_t$}

\FOR{$j = 1$ to $N_x$}
\STATE $\bm{\overline{u}}_j^n \gets L(\lambda) \bm{\hat u}_j^n$
\STATE $\bm{\widetilde v}_{\eta,j}^n \gets
 \bm{\hat v}_{\eta}(\bm{\bar u}_{j}^n)$ using Newton's method with gradient 
 tolerance $\tau$
\ENDFOR
\FOR{$j=1$ to $N_x$}
\STATE $ \bm{\hat u}_j^{n+1} \leftarrow \bm{\overline{u}}_j^{n}
 - \frac{\Delta t}{\Delta x}(\bm{G^*}(\bm{\widetilde v}_{\eta,j}^n,
 \bm{\widetilde v}_{\eta, j+1}^n)
 - \bm{G^*}(\bm{\widetilde v}_{\eta, j-1}^n, \bm{\widetilde v}_{\eta,j}^n))$
\ENDFOR

\ENDFOR
\end{algorithmic}
\caption{Regularized Filtered IPM Method}
\label{alg:fIPM-reg}
\end{algorithm}


\section{Results}
\label{sec:results}
In the following we will present numerical results for the two proposed filtering techniques. All results can be reproduced with the source code \cite{uqcreator}, which makes use of the intrusive UQ framework presented in \cite{kusch2020intrusive}.
\subsection{Effects of the Regularization}\label{sec:regulatizationRes}
\begin{figure}
\centering
\includegraphics[scale=0.4]{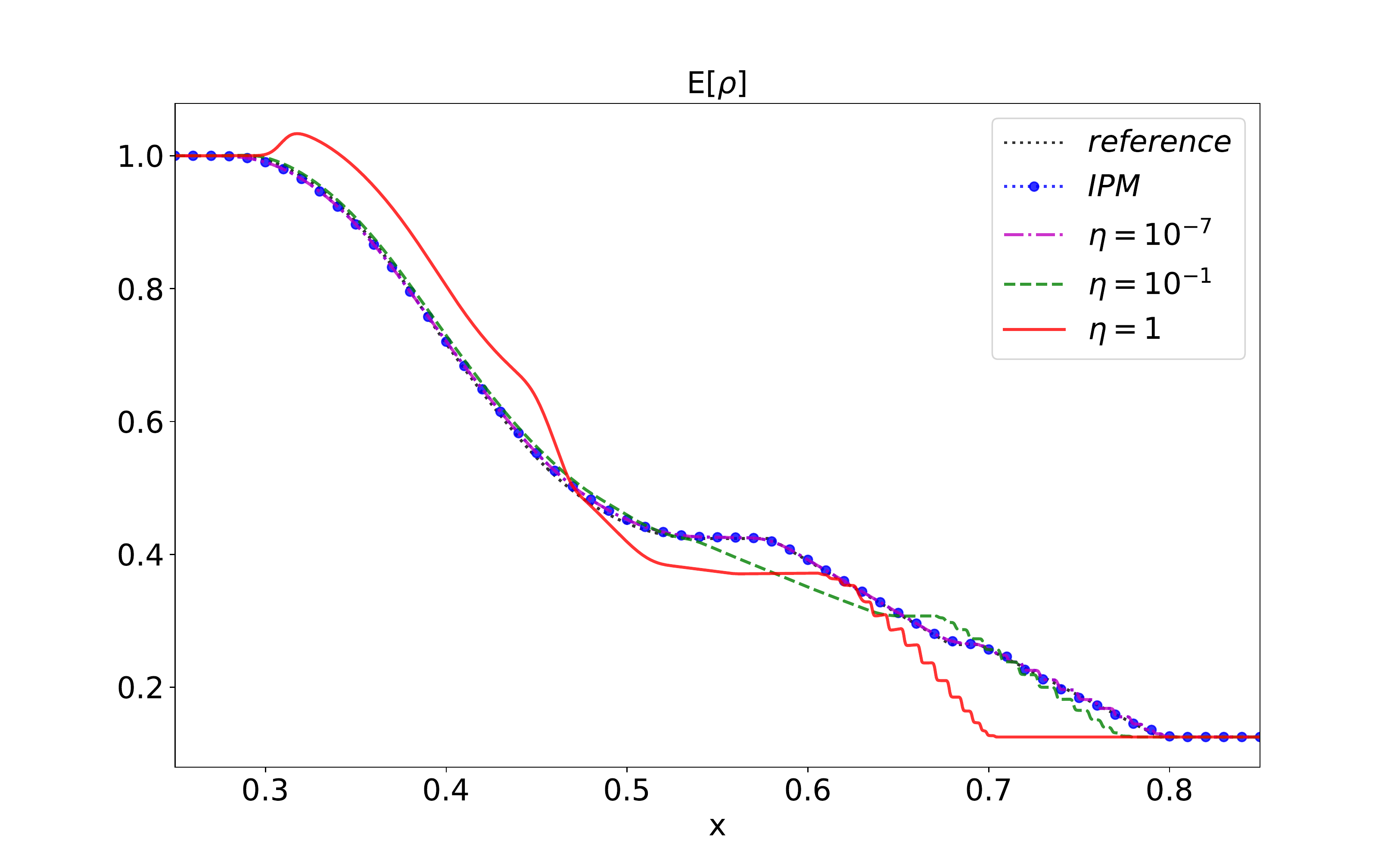}
\includegraphics[scale=0.4] 
{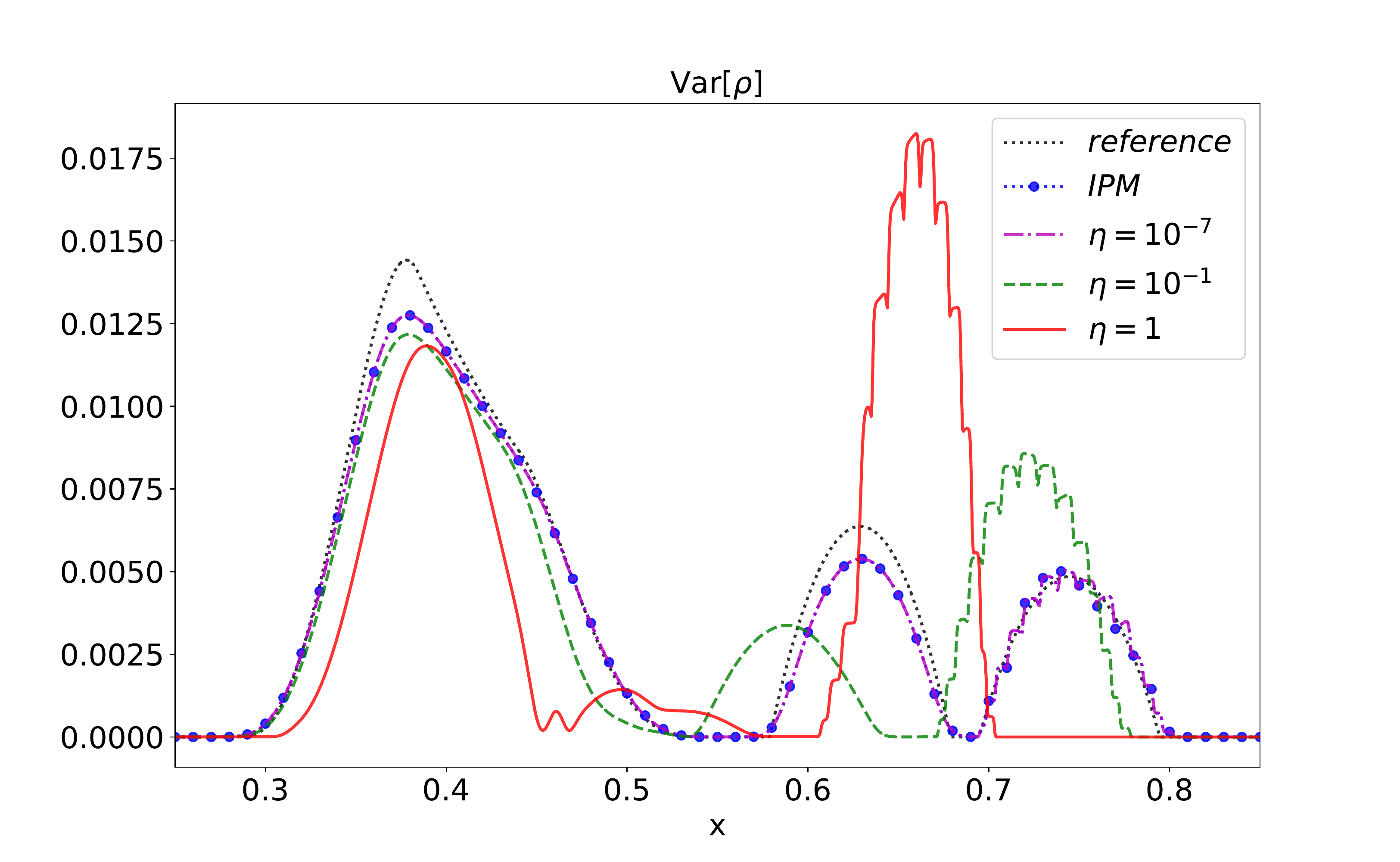}
\caption{Expected value and variance of gas density for different regularization strengths.}
\label{fig:regularizationEffects}
\end{figure}
Before moving to filtering, we study the effects of the regularization for 
different regularization strengths $\eta$. For this we investigate the 
following 
test case: We wish to solve the uncertain Sod shock tube problem, which is 
given 
by
\begin{align*}
\partial_t
\begin{pmatrix}
\rho \\ \rho v \\ \rho e
\end{pmatrix}
+\partial_x
\begin{pmatrix}
\rho v \\ \rho v^2 +p \\ v (\rho E+p)
\end{pmatrix}
=\bm{0},
\end{align*}
with the initial conditions
\begin{align*}
\rho_{\text{IC}} &= \begin{cases} \rho_L &\mbox{if } x < 
x_{\text{interface}}(\xi) \\
\rho_R & \mbox{else } \end{cases} \\
(\rho v)_{\text{IC}} &= 0 \\
(\rho E)_{\text{IC}} &= \begin{cases} \rho_L E_L &\mbox{if } x < 
x_{\text{interface}}(\xi) \\
\rho_R E_R & \mbox{else } \end{cases}
\end{align*}
Here, $\rho$ is the density, $u$ is the velocity and $e$ is the specific total 
energy. For a given heat capacity ratio $\gamma$ which in our case has a value 
of $1.4$, the pressure $p$ can be determined from
\begin{align*}
p = (\gamma-1)\rho\left(E-\frac{1}{2}v^2\right).
\end{align*}
The random interface position is given by $x_{\text{interface}}(\xi) = 
x_0+\sigma \xi$ with a uniformly distributed random variable $\xi\sim 
U([-1,1])$. We use Dirichlet boundary conditions at the left and right 
boundary. A reference solution is given by the analytic solution of the Sod shock tube test case. We approximate expected value and variance of the analytic solution by using a 100 point Gauss-Legendre quadrature rule.
The remaining parameter values are
\begin{center}
    \begin{tabular}{ | l | p{7cm} |}
    \hline
    $[a,b]=[0,1]$ & range of spatial domain \\
    $N_x=2000$ & number of spatial cells \\
    $t_\mathrm{end}=0.14$ & end time \\
    $x_0 = 0.5, \sigma = 0.05$ & interface position parameters\\
    $\rho_L,p_L = 1.0, \rho_R = 0.125, p_R = 0.1$ & initial states\\
    $N = 10$ & polynomial degree \\
    $N_q = 30$ & number of quadrature points \\
    $\tau = 10^{-7}$ & gradient tolerance for IPM \\
    \hline
    \end{tabular}
\end{center}
The IPM method uses the entropy
\begin{align*}
s(\rho,\rho v,\rho e) = \rho\ln{\left(\rho^{-\gamma}\left( \rho e - \frac{(\rho 
v)^2}{2 \rho} \right)\right)}.
\end{align*}
One observes that this problem cannot be solved with SG or fSG, since negative 
densities $\rho$ will show up already in the first time iteration. We now run 
this test case using IPM with different values for the regularization strength 
$\eta$. Hence, we run Algorithm~\ref{alg:fIPM-reg} without applying the filter. Taking a look at the 
resulting expected value and variance of the density $\rho$ in 
Figure~\ref{fig:regularizationEffects}, one sees that a big regularization 
parameter will heavily affect the solution. As the regularization strength 
decreases, the solution will approach the IPM solution and with $\eta =10^{-7}$ 
the regularized solution shows good agreement with IPM. When the regularization 
parameter is not sufficiently small, we observe significant effects on higher 
order moments: When looking at the variance of the rarefaction wave and the 
contact discontinuity, one notices strong dampening. Note however that the 
variance in these regimes is very sensitive to the method used, see for example 
\cite{kusch2018filtered,schlachter2018hyperbolicity}. It is important to point out that commonly, a regularization strength of $\eta = 10^{-7}$ is chosen and we compare against high values of the regularization strength to demonstrate the effects a regularization might have on the solution. For small regularization strength, the resulting numerical solution agrees well with the classical IPM solution.

The regularized solution with a small regularization parameter as well as the 
original IPM method show good approximation behavior in most parts of the 
solution. However, especially at the shock position, all methods will yield a 
non-physical, step-like approximation of the expected value and oscillatory 
results for the corresponding variance. Note that the step-like approximation of the IPM solution only becomes visible when further zooming into the shock region. For a deeper study of these artifacts we refer to \cite{kusch2020oscillation}. 

This work focuses on obtaining better 
approximations at the shock position through filtering techniques. Note that 
the 
filter will further dampen the variance approximations at the rarefaction wave 
and the shock discontinuity. One idea to mitigate these effects that we leave 
for future studies is choosing an adaptive filter strength as done in 
\cite{kusch2018filtered}.

\subsection{Filtering for Sod's shock tube}
\begin{figure}[h!]
\centering
	\begin{subfigure}{0.49\linewidth}
		\centering
		\includegraphics[scale=0.32]{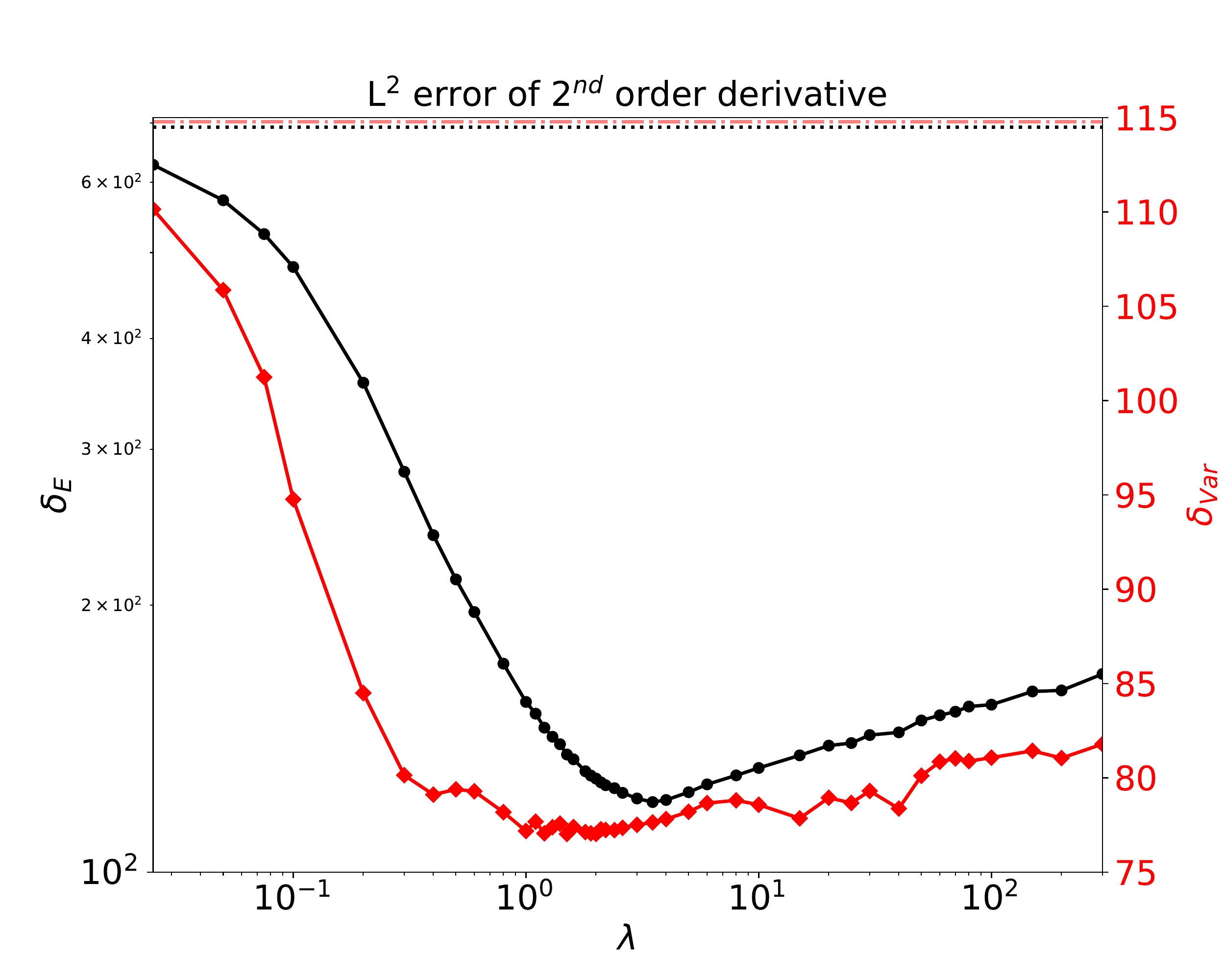}
		\caption{Exponential filter}
		\label{fig:ErrorsDeltasub1Sod}
	\end{subfigure}
	\begin{subfigure}{0.49\linewidth}
		\centering
		\includegraphics[scale=0.32]{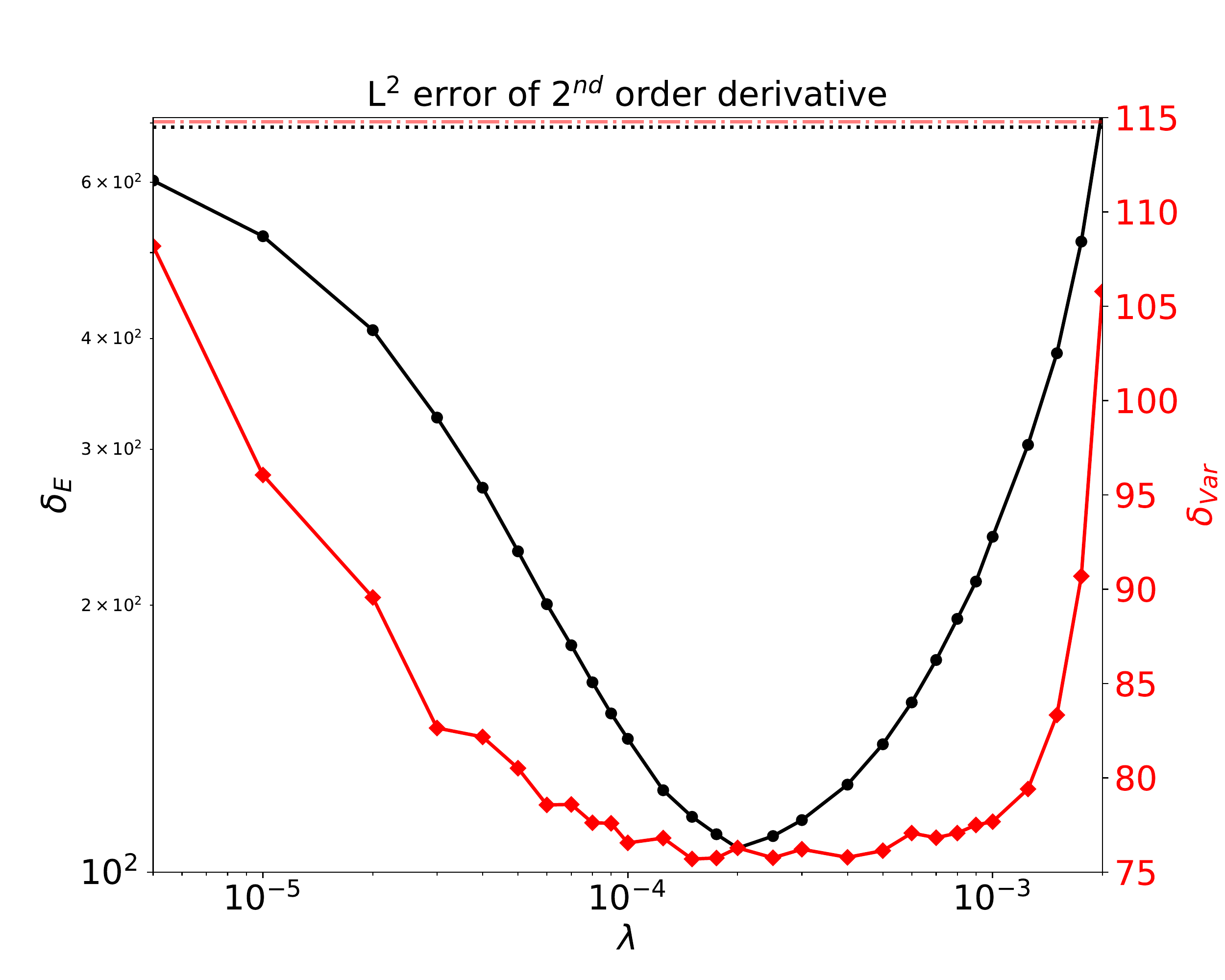}
		\caption{Fokker-Planck filter}
		\label{fig:ErrorsDeltasub2Sod}
	\end{subfigure}
	\caption{$\delta_{\text{E}}$ (in black with dots) and $\delta_{\text{Var}}$ (in red with diamonds) for different filter strengths when using the exponential and Fokker-Planck filter. The exponential filter uses a regularization strength $\eta=10^{-7}$ and order $\alpha=10$. Values for $\delta_{\text{E}}$ and $\delta_{\text{Var}}$ without filters are added as a straight line.}
	\label{fig:ErrorsDeltaSod}
\end{figure}

\begin{figure}[h!]
\centering
\begin{subfigure}{.5\textwidth}
  \centering
  
\includegraphics[scale=0.27]{%
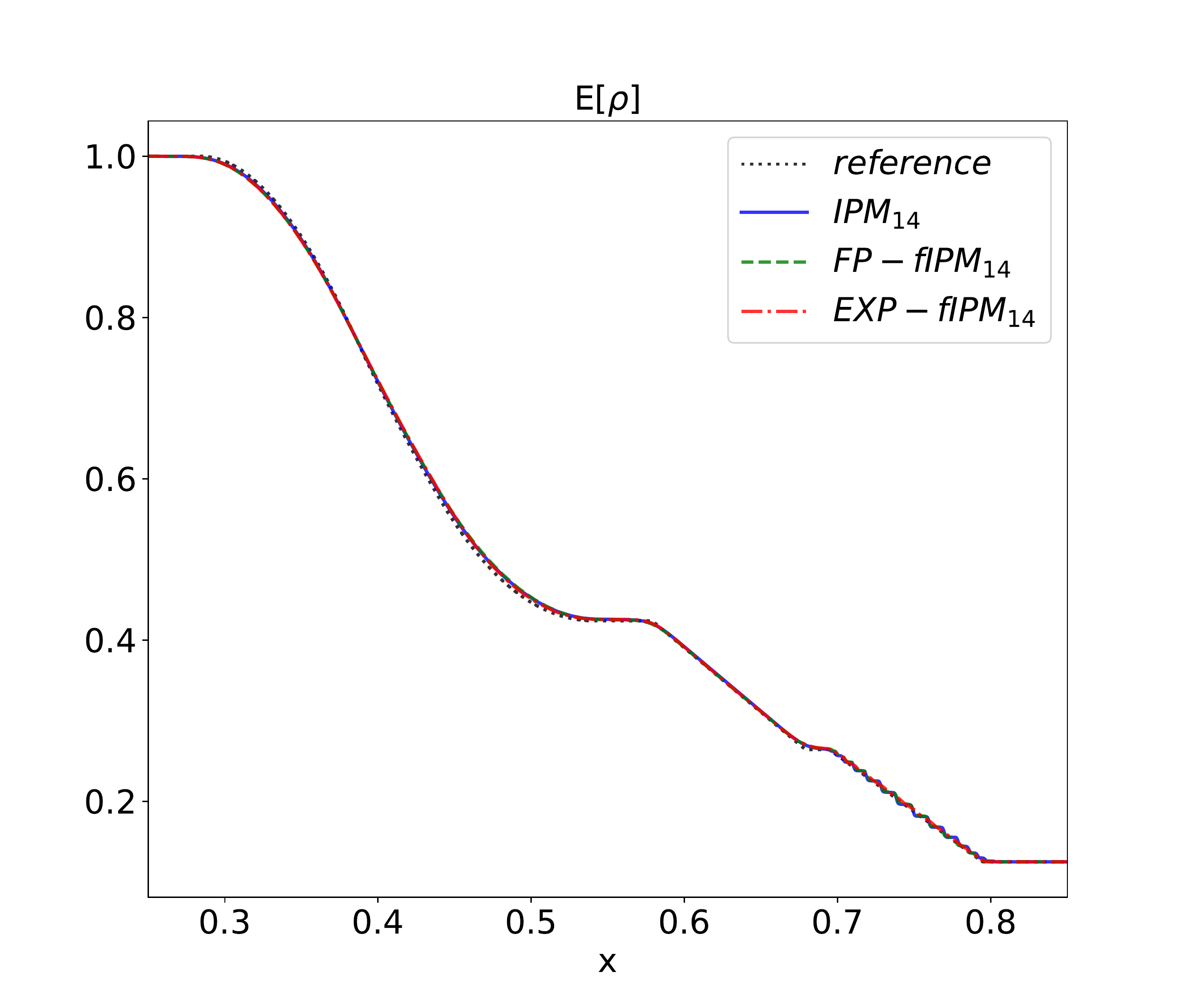}\hspace{-1.0cm}
  \caption{}
  \label{fig:CompareFilteringDensity}
\end{subfigure}%
\begin{subfigure}{.5\textwidth}
  \centering
\includegraphics[scale=0.27]{%
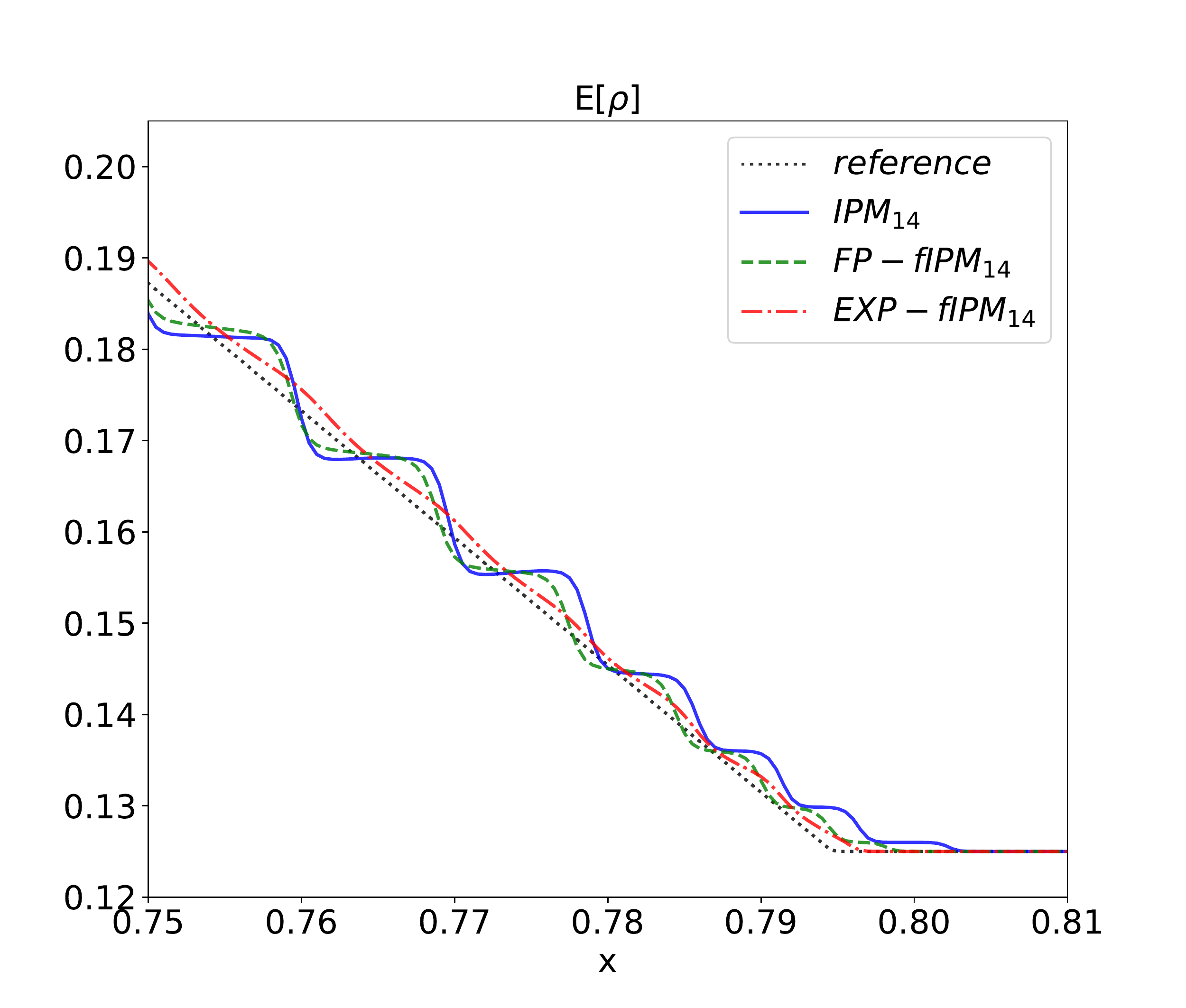}
 \vspace{0.2cm}
    \caption{}
     \label{fig:CompareFilteringZoomOnShock}
\end{subfigure}
\begin{subfigure}{.5\textwidth}
  \centering
  
\includegraphics[scale=0.27]{%
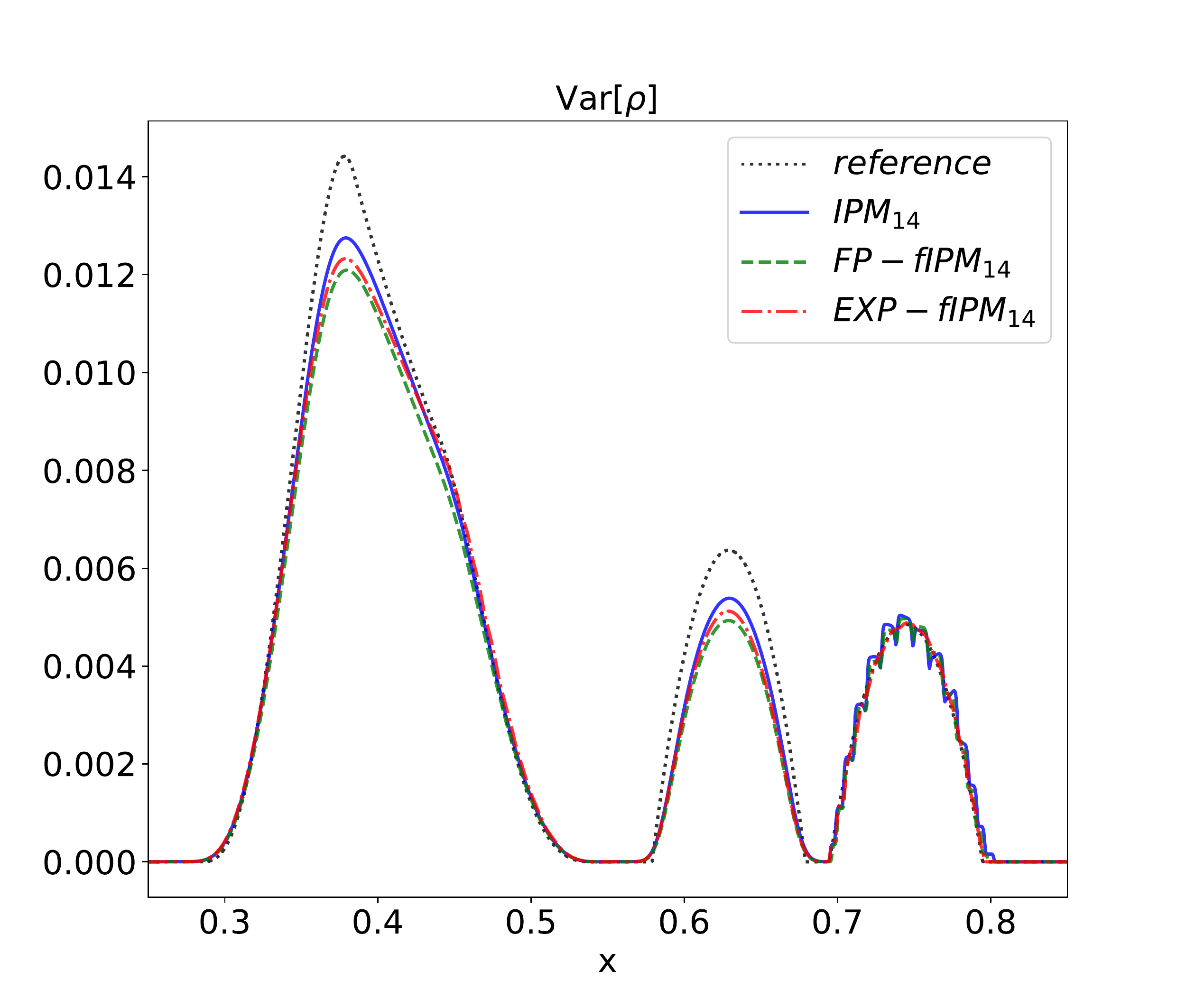}\hspace{-1.0cm}
  \caption{}
  \label{fig:CompareFilteringVar}
\end{subfigure}%
\begin{subfigure}{.5\textwidth}
  \centering
\includegraphics[scale=0.27]{%
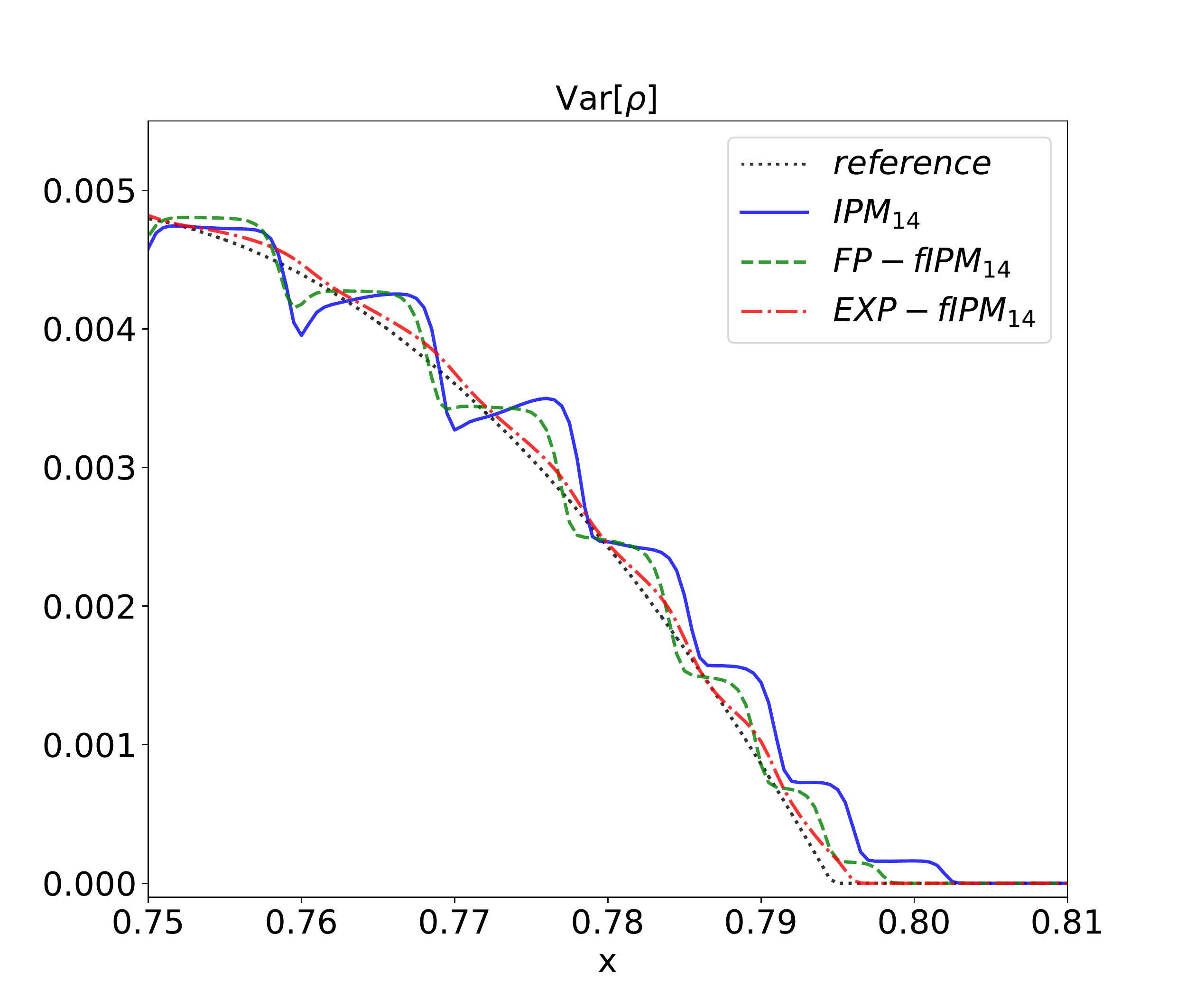}
 \vspace{0.2cm}
    \caption{}
     \label{fig:CompareFilteringVarZoomOnShock}
\end{subfigure}
\caption{Expectation and variance computed with IPM and filtered IPM ($\lambda=2,\alpha=10$ for exponential filtering and $\lambda=5\cdot10^{-5}$ for Fokker-Planck filtering) for Sod's shock tube. The exact expected value is depicted in red, the exact variance in blue. (a) Expected density. (b) Zoomed view on shock for expected density. (c) Variance of density. (d) Zoomed view on shock for variance of density.}
\label{fig:compareFilters}
\end{figure}

\begin{figure}[h!]
\centering
\begin{subfigure}{.5\textwidth}
  \centering
\includegraphics[scale=0.27]{%
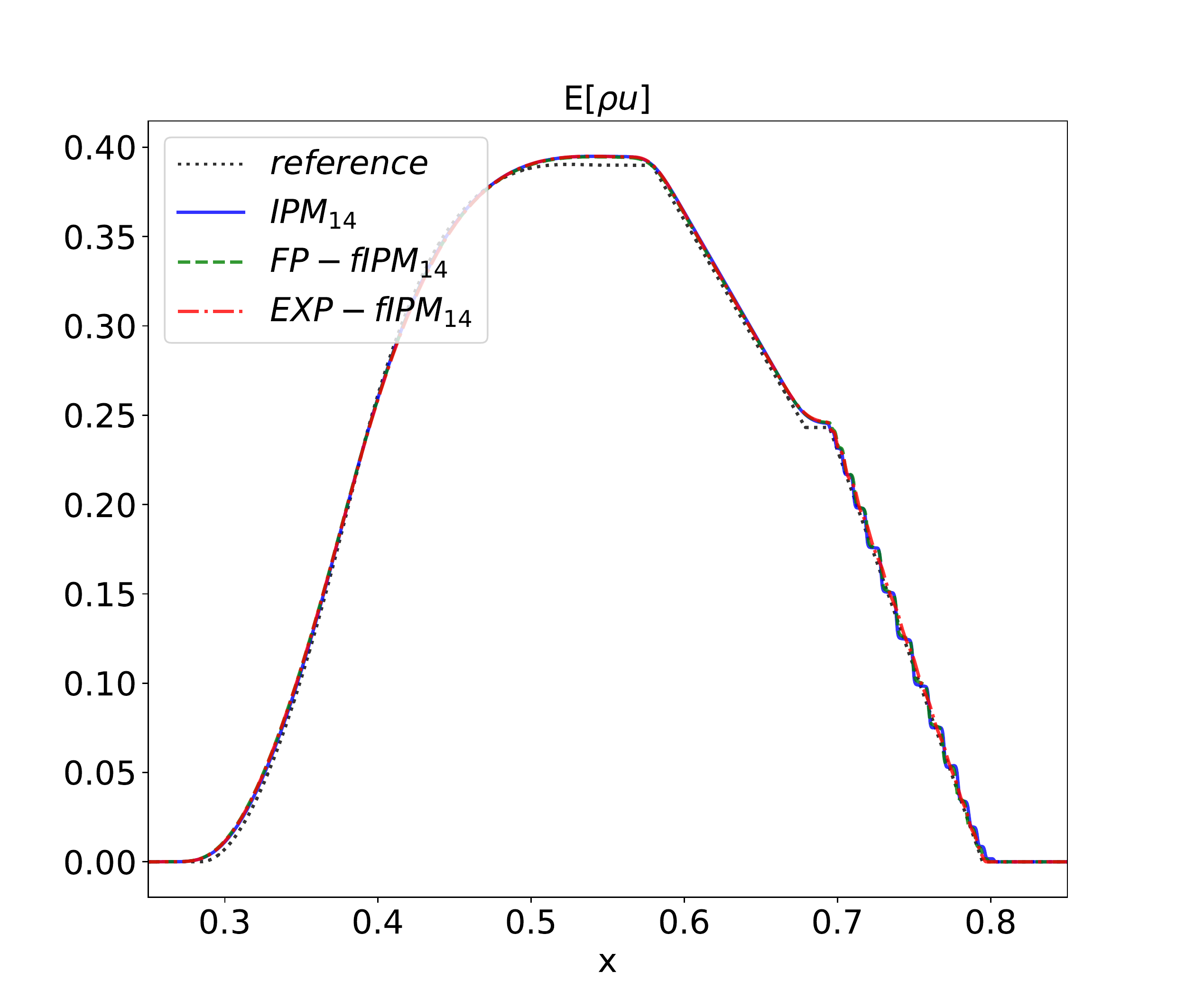}\hspace{-1.0cm}
  \caption{}
  \label{fig:CompareFilteringMomentum}
\end{subfigure}%
\begin{subfigure}{.5\textwidth}
  \centering
\includegraphics[scale=0.27]{%
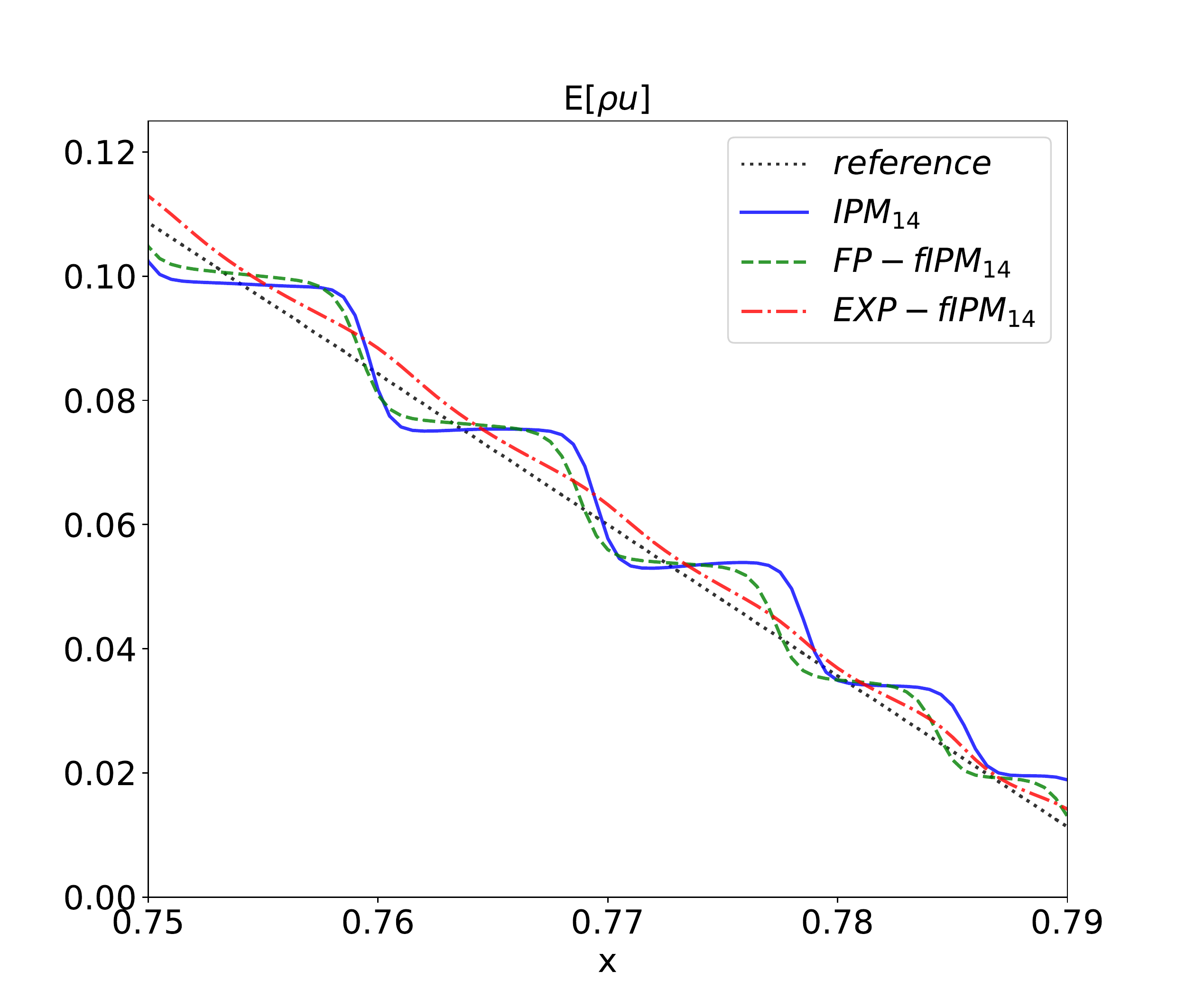}
 \vspace{0.2cm}
    \caption{}
     \label{fig:CompareFilteringMomentumZoomOnShock}
\end{subfigure}
\begin{subfigure}{.5\textwidth}
  \centering
\includegraphics[scale=0.27]{%
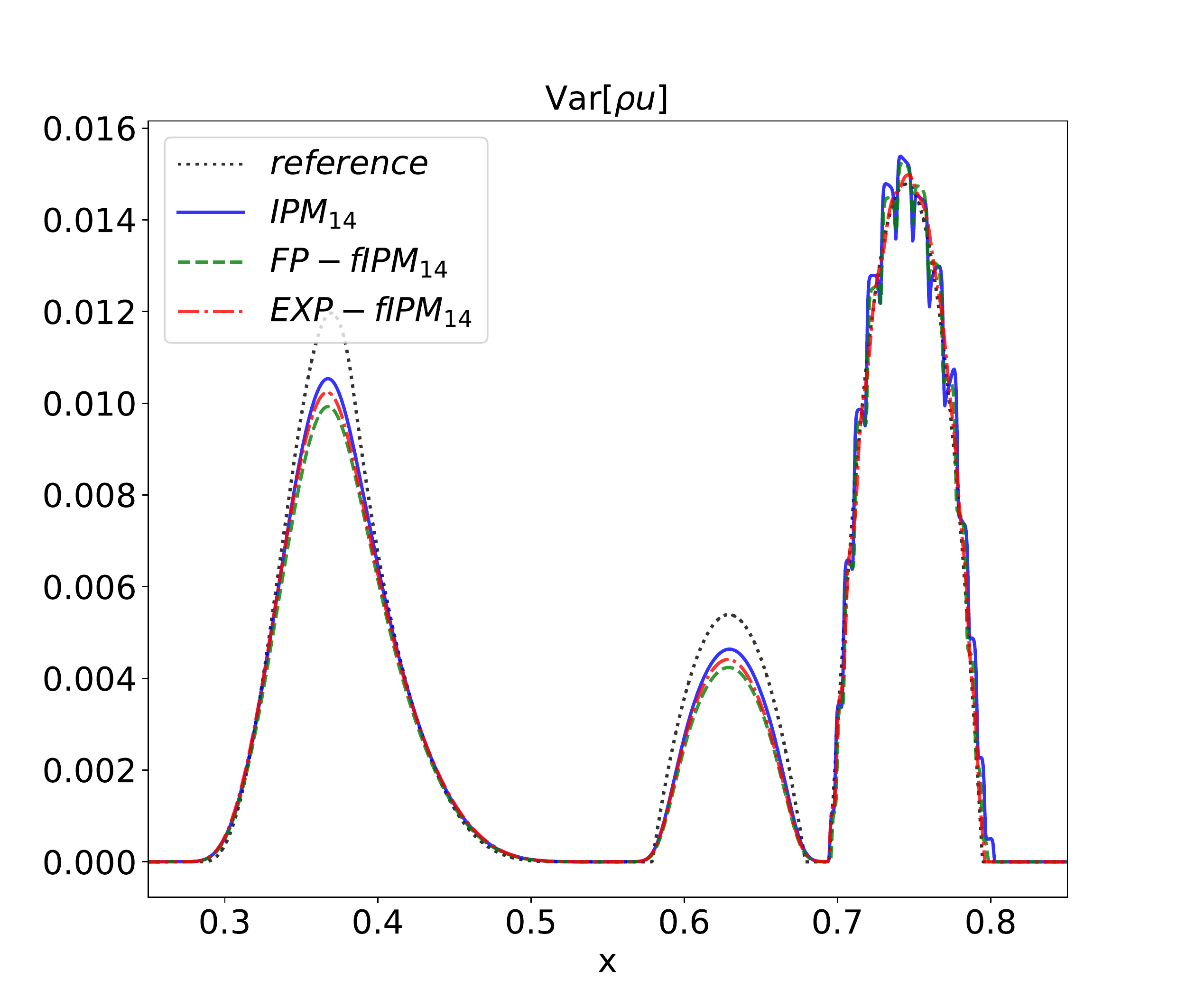}\hspace{-1.0cm}
  \caption{}
  \label{fig:CompareFilteringMomentum}
\end{subfigure}%
\begin{subfigure}{.5\textwidth}
  \centering
\includegraphics[scale=0.27]{%
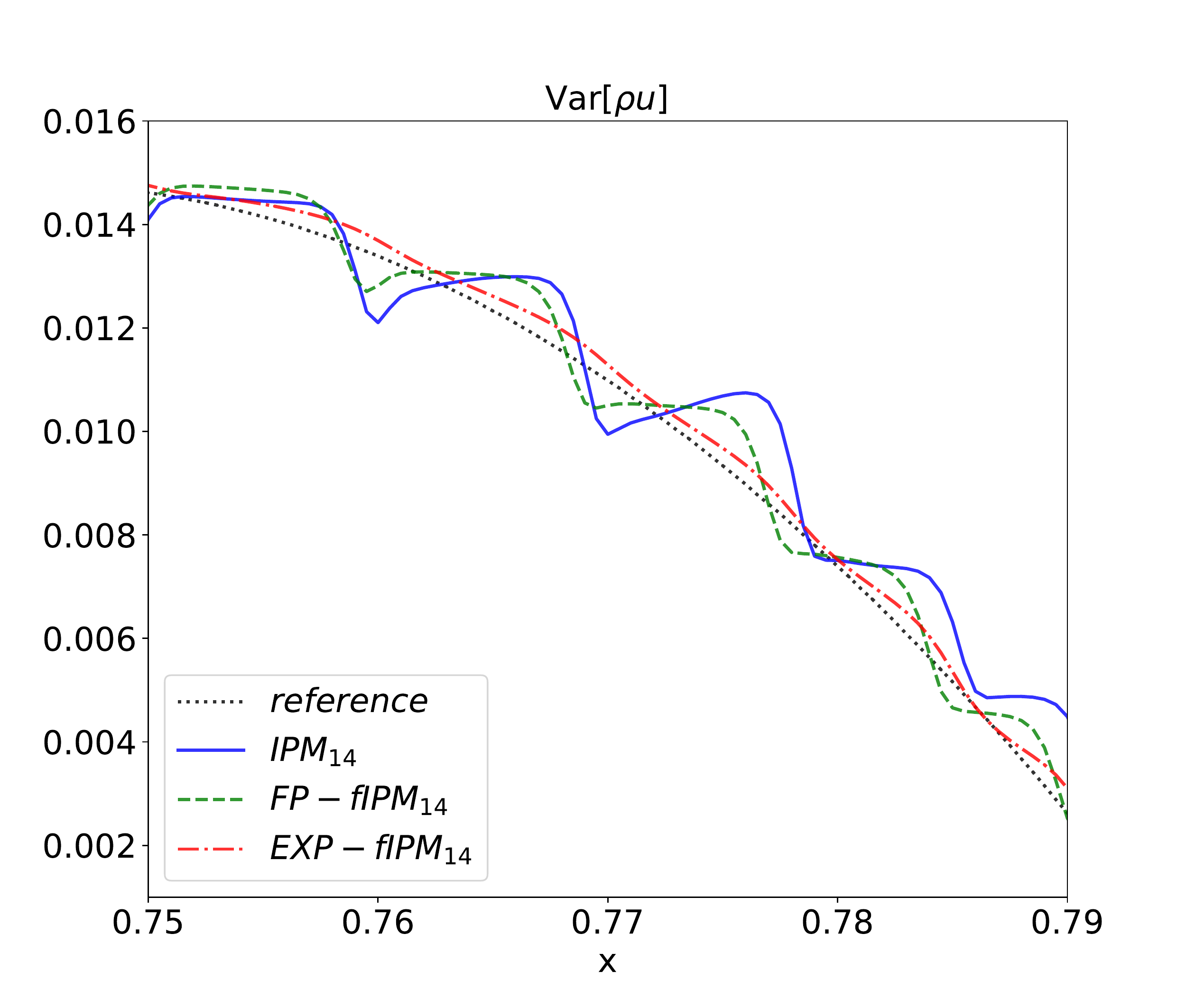}
 \vspace{0.2cm}
    \caption{}
     \label{fig:CompareFilteringMomentumZoomOnShock}
\end{subfigure}
\caption{Expectation and variance computed with IPM and filtered IPM 
($\lambda=2,\alpha=10$ for exponential filtering and $\lambda=5\cdot10^{-5}$ for 
Fokker-Planck filtering) for Sod's shock tube. The exact expected value is depicted in red, the 
exact variance in blue. (a) Expected momentum. (b) Zoomed view on shock for expected momentum. (c) Variance of momentum. (d) Zoomed view on shock for variance of momentum.}
\label{fig:compareFiltersMomentum}
\end{figure}
Let us start with studying the effects of the filter strength on the solution quality. As a measure for oscillations, let us investigate the error of second order derivatives. If the distance of a discrete numerical solution $u_{\Delta}:=(u_1,\cdots,u_{N_x})^T$ to a reference density $u_{\text{ex},\Delta}:=(u_{\text{ex},1},\cdots,u_{\text{ex},N_x})^T$ is given by $e_{\Delta} := u_{\text{ex},\Delta}-u_{\Delta}$, we wish to determine
\begin{align*}
\delta_{\text{E}}:=\sqrt{\sum_{j=1}^{N_x} \Delta x_j\left(  \partial_{xx} \mathbb{E}[e_j] \right)^2 }
\end{align*}
and
\begin{align*}
\delta_{\text{Var}}:=\sqrt{\sum_{j=1}^{N_x} \Delta x_j \left( \partial_{xx} \text{Var}[e_j] \right)^2 }.
\end{align*}
The resulting curves for both variables when choosing varying filter strengths can be found In Figure~\ref{fig:ErrorsDeltaSod}. Here and in subsequent uses of the exponential and erfc filter, we solve issues resulting from realizability by picking a constant regularization parameter $\eta=10^{-7}$. In all of our numerical experiments, this choice allowed solving the regularized dual problem. The filter order is here chosen to be $\alpha = 10$.  Let us point out that the chosen measures $\delta_{\text{E}}$ and $\delta_{\text{E}}$ do not necessarily provide a comparison between different methods and simply allows an idea of a meaningful range for the filter strength. Furthermore, we evaluate both error measures only in the shock region $x\in[0.7,0.8]$ to remove dampening effects from the rarefaction wave. It becomes clear that the optimal filter strength in terms of expectation and variance differs. Both filters reach an optimal variance value at a smaller filter strength compared to the optimal expectation. This means that while a bigger filter strength gives a good expectation approximation it heavily dampens out the variance, i.e., yielding a significant error in the uncertainty. Therefore, let us in the following choose $\lambda=2$ for the exponential filter. Fokker-Planck filtering seems to be optimal for values around $\lambda=10^{-4}$ around the shock. However, this choice leads to heavy dampening of the variance at the rarefaction wave, which is why we slightly reduce this value to $\lambda=5\cdot10^{-5}$. In this case, the Fokker-Planck filter shows a similar behaviour as the exponential filter at the rarefaction wave.

Note that we fixed the filter order $\alpha = 10$ for exponential filtering. Without going into detail, let us remark that increasing the order of the exponential filter mitigates the dampening effect at the rarefaction wave when increasing the filter strength. Thereby, a satisfactory value for $\lambda$, which improves the shock structure while preserving the remainder of the solution is easier to find. Choosing the order $\alpha$ too large will however require a significantly increased filter strength to obtain an improvement at the shock position. Since this behavior is difficult to capture by an error measure, we do not show a detailed parameter study as done for the filter strength. All remaining parameters are chosen as in section~\ref{sec:regulatizationRes}. Resulting expected values and variances are depicted in Figure~\ref{fig:compareFilters}. Again, the reference solution has been computed from the analytic solution of the Sod shock tube test case. As before, the expected value and variance of the analytic solution are approximated by a 100 point Gauss-Legendre quadrature rule.

We first examine the shock position: The expected value and variance of the density again show heavy oscillations in the variance and a step-like profile at the shock if no filtering is applied. Using the exponential filter leads to a smooth linear connection between the left and right shock state for the expected value, which shows good agreement with the exact solution. The variance is slightly dampened, however its smooth profile nicely captures the main characteristics of the exact variance. This behavior can be examined in detail when zooming onto the shock position as done in Figures~\ref{fig:CompareFilteringZoomOnShock} and \ref{fig:CompareFilteringVarZoomOnShock}. 
The Fokker-Planck filter appears to be less effective. Compared to standard IPM, the filter yields an improved approximation at the shock at the costs of dampening the variance at the rarefaction wave and contact discontinuity. When comparing exponential and Fokker-Planck filtering, one observes that exponential filtering yields a more satisfactory solution approximation, i.e., the shock region nicely captures the exact behavior while the variance at the rarefaction wave is only slightly damped. A similar conclusion can be drawn when looking at the expected momentum and its corresponding variance in Figure~\ref{fig:compareFiltersMomentum}.

There are different strategies that can be chosen to mitigate non-physical dampening effects at the rarefaction wave.
One example is using an adaptive filter strength. This approach can be combined with adaptivity in general \cite{kusch2020intrusive}. In 
this work, however, we focus on mitigating oscillations at the shock and leave 
other open questions to future work.

\subsection{Filtering for high density shock}\label{sec:resultsHighDensity1D}
In the following, we keep the same parameter setting, but change the initial condition to verify if the previous choice of the filtering parameters still yields a satisfactory result. The test case we investigate has an increased density state of $\rho_R=0.8$ on the right hand side. The remaining shock states are left untouched. The resulting expected density and corresponding variance can be found in Figure~\ref{fig:compareFiltersDensityHD}. 
\begin{figure}[h!]
\centering
\begin{subfigure}{.5\textwidth}
  \centering
\includegraphics[scale=0.27]{%
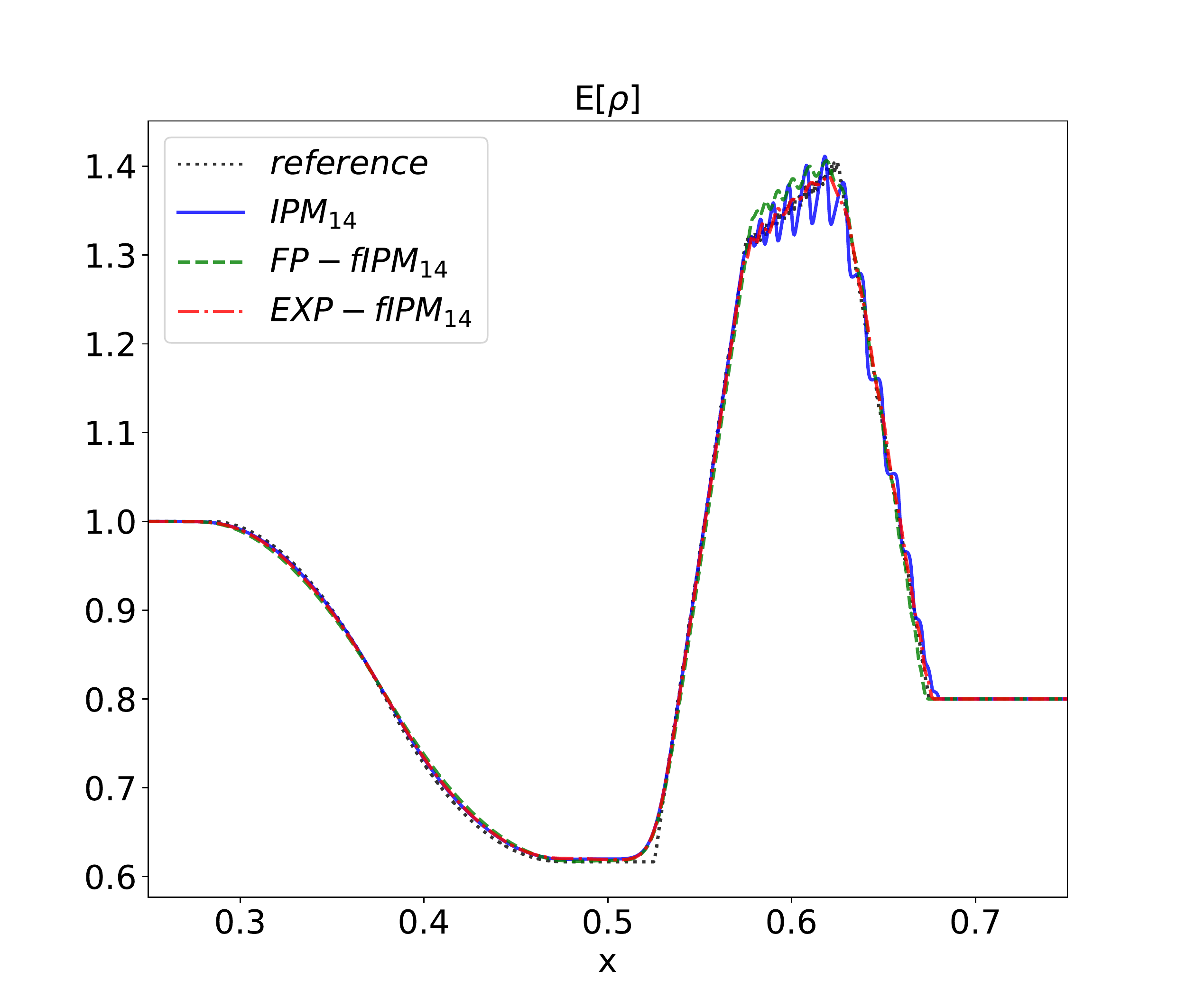}\hspace{-1.0cm}
  \caption{}
\end{subfigure}%
\begin{subfigure}{.5\textwidth}
  \centering
\includegraphics[scale=0.27]{%
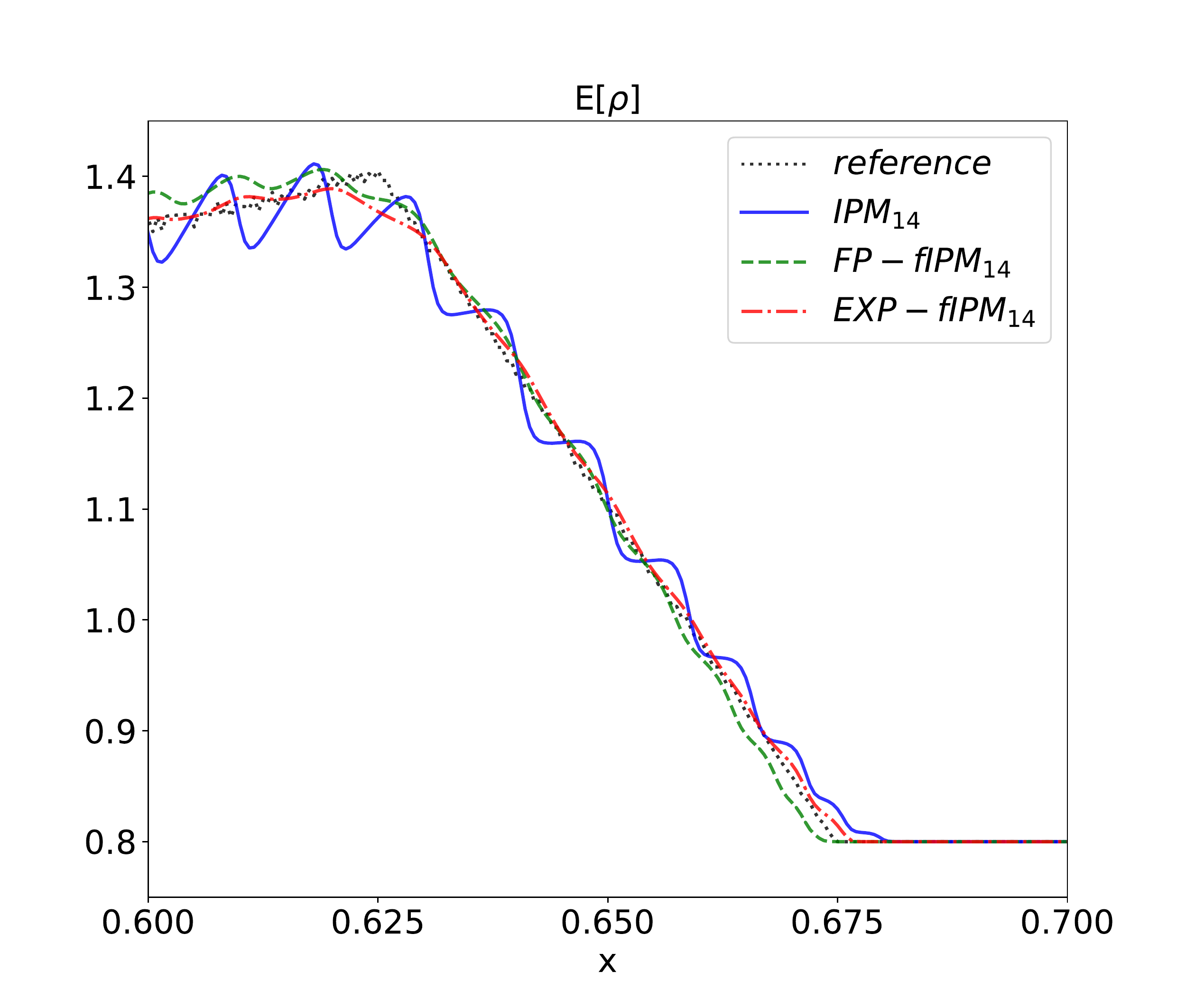}
 \vspace{0.2cm}
    \caption{}
\end{subfigure}
\begin{subfigure}{.5\textwidth}
  \centering
\includegraphics[scale=0.27]{%
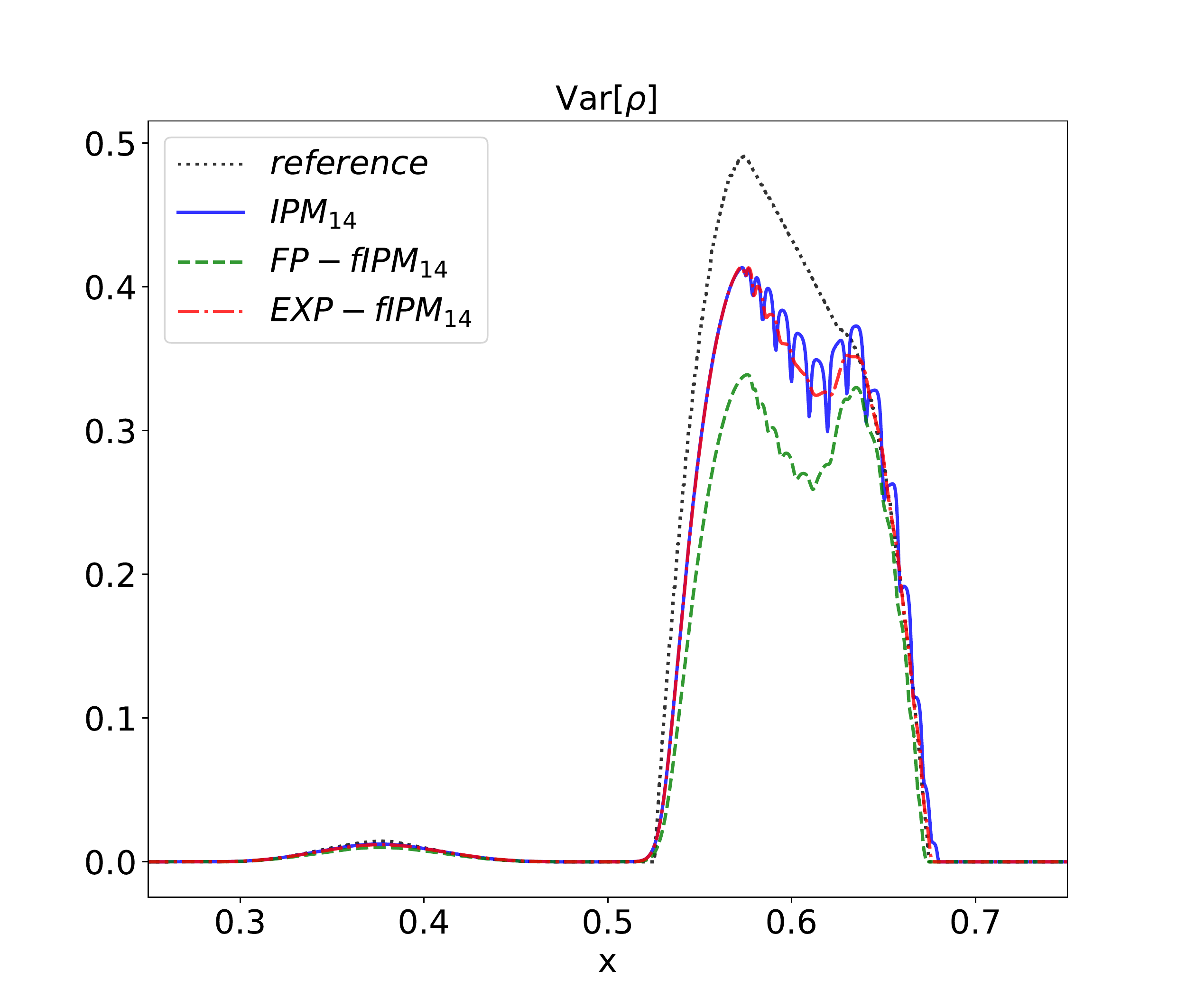}\hspace{-1.0cm}
  \caption{}
\end{subfigure}%
\begin{subfigure}{.5\textwidth}
  \centering
\includegraphics[scale=0.27]{%
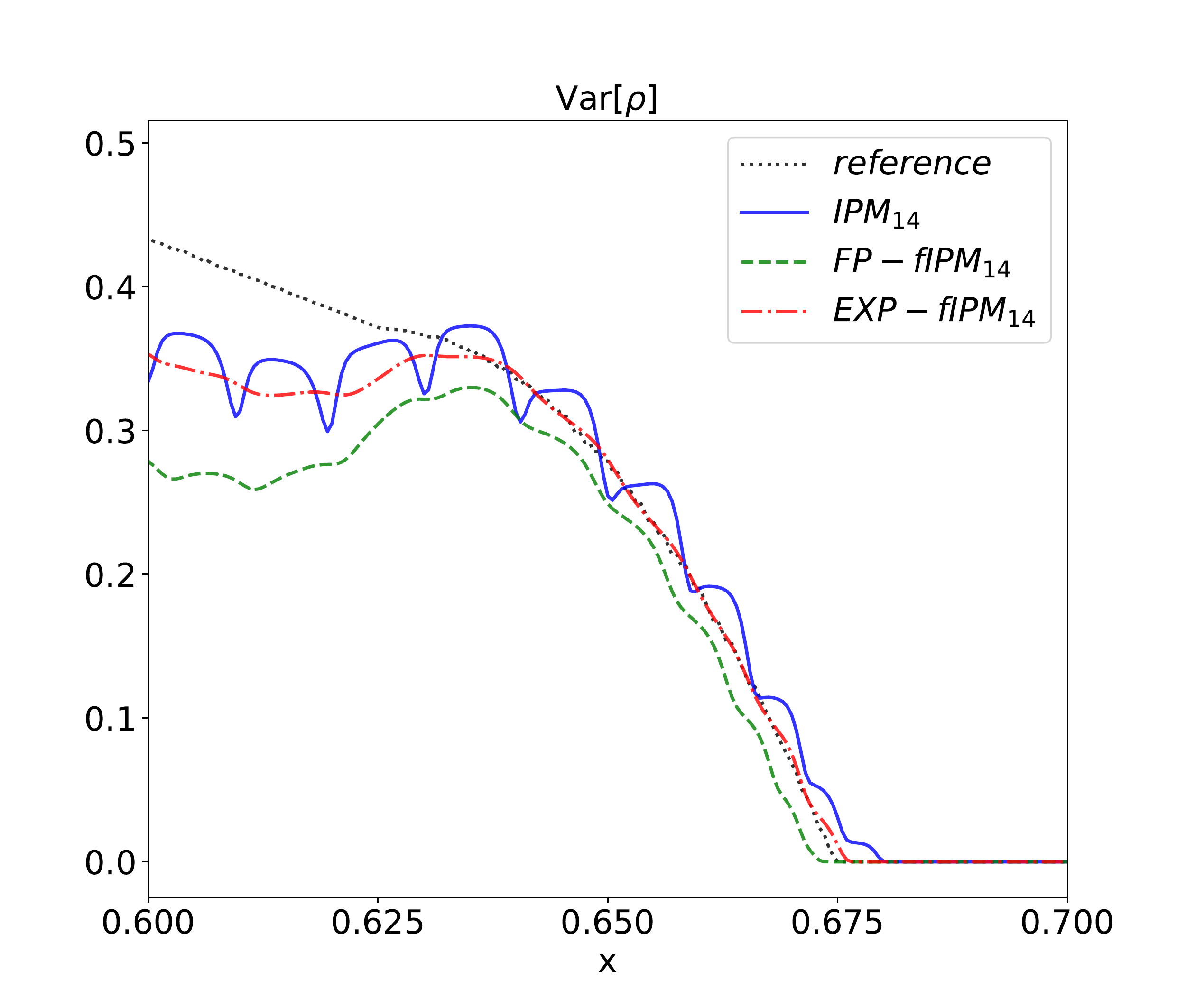}
 \vspace{0.2cm}
    \caption{}
\end{subfigure}
\caption{Expectation and variance for the high density shock computed with IPM and filtered IPM 
($\lambda=2,\alpha=10$ for exponential filtering and $\lambda=10^{-5}$ for 
Fokker-Planck filtering). The exact expected value is depicted in red, the 
exact variance in blue. (a) Expected density. (b) Zoomed view on shock for expected density. (c) Variance of density. (d) Zoomed view on shock for variance of density.}
\label{fig:compareFiltersDensityHD}
\end{figure}

Again, the reference solution has been computed with the analytic solution. Expected value and variance of the analytic solution are approximated by taking a 100 point Gauss-Legendre quadrature. Compared to the previous setting, the test case shows a large amount of density, which travels to the right hand side. In this test case, the rarefaction wave becomes less dominant. One observes that using the standard IPM method leads to heavy oscillations in the IPM solution of both, the expected value and the variance. Moreover, the variance approximation of IPM is heavily dampened out, which is most likely an artifact from the artificial viscosity of the finite volume method \cite[Chapter~16.1]{leveque1992numerical}. Applying filters to this problem resolves oscillations. Especially the exponential filter captures the behavior of the exact variance nicely. Oscillations in the variance approximation are significantly mitigated, however the filter is not able to reduce the dampening effect the finite volume scheme has on the variance. The Fokker-Planck filter improves the approximation of the expected value. However, while oscillations in the variance are dampened, the Fokker-Planck filter further reduces the value of the variance approximation.

\subsection{Filtering for Euler 2D}
\label{sec:2DResults}

\begin{figure}[h!]
\centering
	\begin{subfigure}{1.0\linewidth}
		\centering
		\includegraphics[scale=0.32]{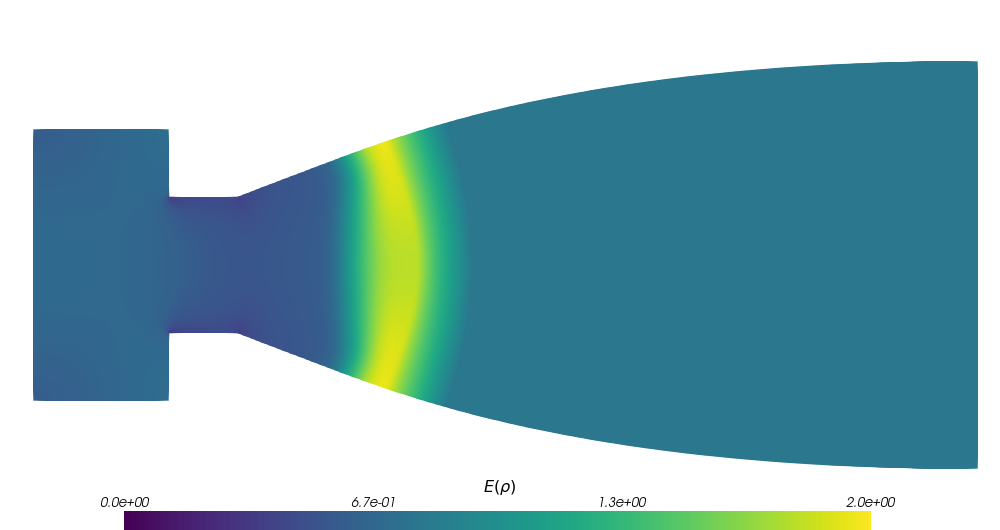}
		\label{fig:sub1}
	\end{subfigure}
	\begin{subfigure}{1.0\linewidth}
		\centering
		\includegraphics[scale=0.32]{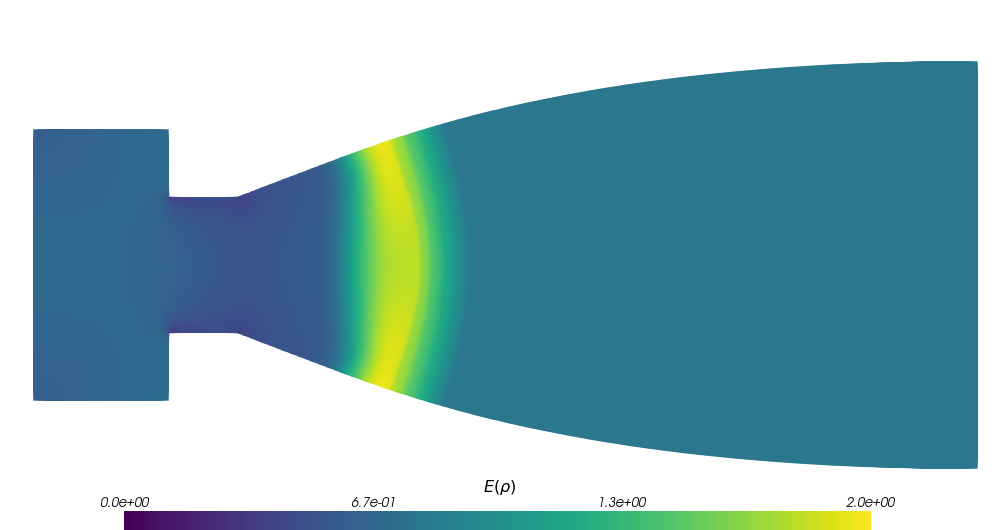}
		\label{fig:sub1}
	\end{subfigure}
	\begin{subfigure}{1.0\linewidth}
		\centering
		\includegraphics[scale=0.32]{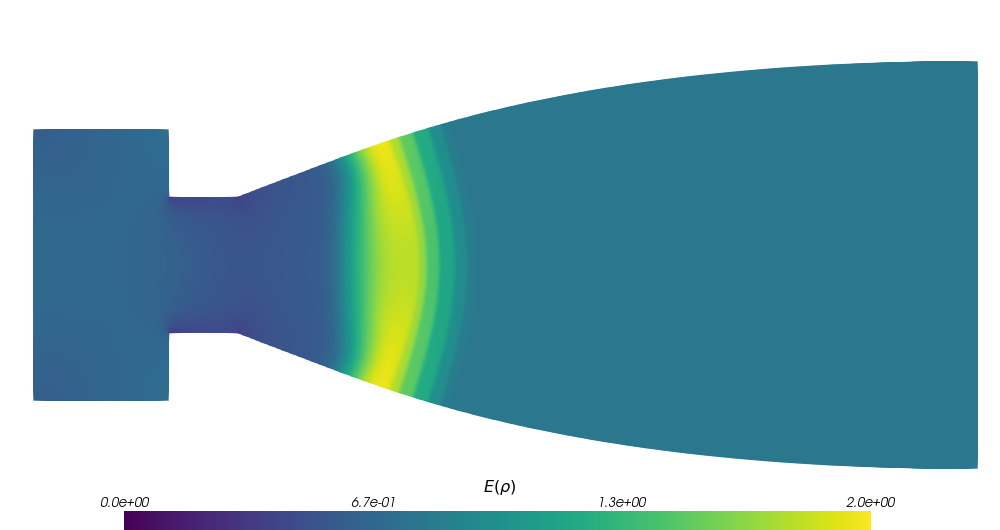}
		\label{fig:sub2}
	\end{subfigure}
	\caption{Expected density with different methods. From top to bottom: reference solution, fIPM, IPM.}
	\label{fig:ExpNozzle2DfIPM}
\end{figure}

\begin{figure}[h!]
\centering
	\begin{subfigure}{1.0\linewidth}
		\centering
		\includegraphics[scale=0.32]{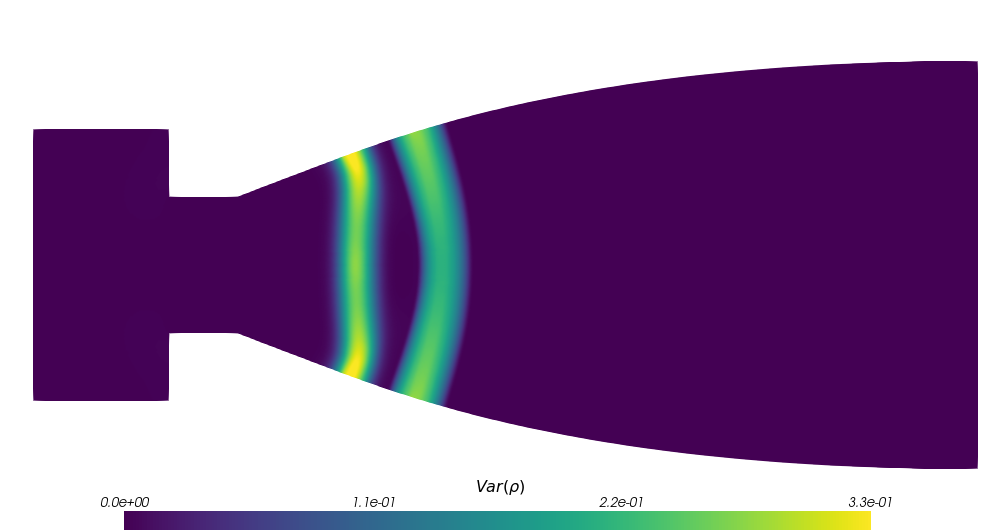}
		\label{fig:sub1}
	\end{subfigure}
	\begin{subfigure}{1.0\linewidth}
		\centering
		\includegraphics[scale=0.32]{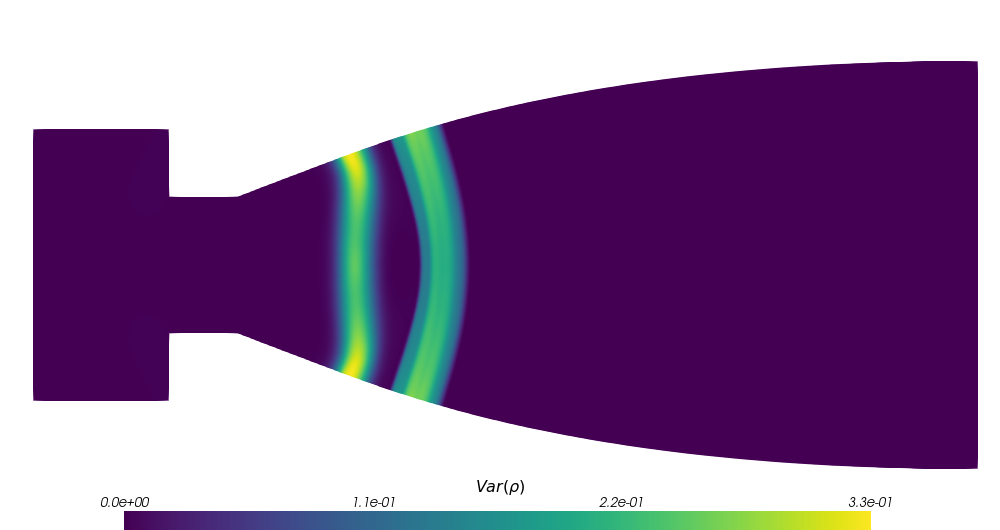}
		\label{fig:sub1}
	\end{subfigure}
	\begin{subfigure}{1.0\linewidth}
		\centering
		\includegraphics[scale=0.32]{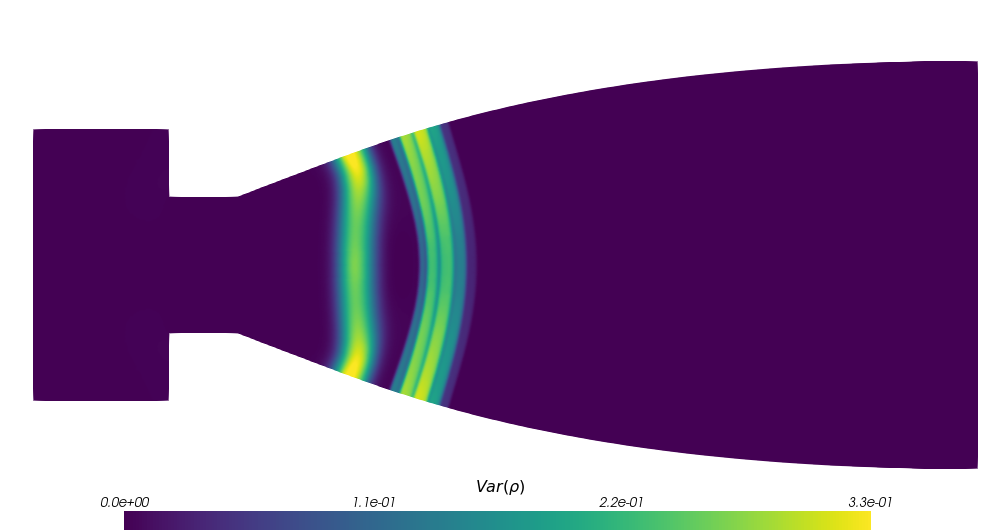}
		\label{fig:sub2}
	\end{subfigure}
	\caption{Variance of the density with different methods. From top to bottom: reference solution, fIPM, IPM.}
	\label{fig:VarNozzle2DfIPM}
\end{figure}
In the following, we will move to the two-dimensional Euler equations 
\begin{align*}
\partial_t
\begin{pmatrix}
\rho \\ \rho v_1 \\ \rho v_2 \\ \rho E
\end{pmatrix}
+\partial_{x_1}
\begin{pmatrix}
\rho v_1 \\ \rho v_1^2 +p \\ \rho v_1 v_2 \\  v_1 (\rho E+p)
\end{pmatrix}
+\partial_{x_2}
\begin{pmatrix}
\rho v_2 \\ \rho v_1 v_2 \\ \rho v_2^2+p \\ v_2 (\rho E+p)
\end{pmatrix}
=\bm{0},
\end{align*}
where the pressure is given by 
\begin{align*}
p = (\gamma-1)\rho\left(E-\frac12(v_1^2+v_2^2)\right).
\end{align*}
We will be interested in quantifying the uncertainty of a shock inside a two-dimensional nozzle. The nozzle geometry is composed of a chamber on the left, a throat in the center and the main nozzle region on the right. As in Sod's shock tube experiment, the gas is initially at rest with an uncertain shock occurring inside the domain. The initial condition is chosen to be
\begin{align*}
\rho_{\text{IC}} &= \begin{cases} \rho_L &\mbox{if } x < 
x_{\text{interface}}(\xi) \\
\rho_R & \mbox{else } \end{cases} \\
(\rho \bm v)_{\text{IC}} &= \bm 0 \\
(\rho E)_{\text{IC}} &= \begin{cases} \rho_L E_L &\mbox{if } x < 
x_{\text{interface}}(\xi) \\
\rho_R E_R & \mbox{else } \end{cases}
\end{align*}
and we pick the shock states as in Section~\ref{sec:resultsHighDensity1D}, i.e., the density on the right hand side is set to $\rho_R=0.8$. The shock position  $x_{\text{interface}}(\xi)$ is distributed uniformly inside the throat. Here we make the choice $x_{\text{interface}}(\xi)=x_{\text{th}}+\sigma\xi$, where $x_{\text{th}}$ is the center $x$-coordinate of the throat. Since the throat has a length of 1, i.e., it ranges from $x_{\text{th}}-0.5$ to $x_{\text{th}}+0.5$, we choose $\sigma = 0.5$ and again use a uniformly distributed $\xi\sim U([-1,1])$. A reference solution, depicted on the top of Figures~\ref{fig:ExpNozzle2DfIPM} and \ref{fig:VarNozzle2DfIPM}, has been computed using Collocation (see \cite{xiu2005high}) with 100 Gauss-Legendre quadrature points. The solution shows an uncertain shock wave which travels to the right side of the nozzle.

In the two-dimensional setting IPM uses the entropy
\begin{align*}
s(\rho,\rho v_1,\rho v_2, \rho E) = -\rho \ln \left(\rho^{-\gamma} \left(\rho E - \frac{(\rho v_1)^2 + (\rho v_2)^2}{2
\rho}\right)\right).
\end{align*}
One can for example find the resulting IPM ansatz in \cite[Appendix~B]{kusch2020intrusive}. The IPM solution, which is depicted in Figures~\ref{fig:ExpNozzle2DfIPM} and \ref{fig:VarNozzle2DfIPM} on the bottom again shows a step-like approximation of the expectation. Furthermore, it leads to an oscillatory variance approximation at the shock position. Let us now apply the Erfc filter \eqref{eq:erfcFilter} to mitigate the spurious artifacts. Since this filter does not preserve realizability, we again need to add a regularization term with $\eta=10^{-7}$, i.e., we choose Algorithm~\ref{alg:fIPM-reg}. The chosen filter strength is $\lambda=0.9$ and we pick a filter order of $\alpha=7$. It can be seen that the resulting approximation shows a satisfactory agreement with the reference solution. The filtered solution shows small artifacts in the shock region, however the overall approximation quality is significantly improved. The exponential filter \eqref{eq:expFilter} shows a similar result compared to the Erfc filter when choosing a filter strength of $\lambda=0.045$ for the same order of $\alpha=7$.\\ 
Now, let us study all filters used in this work, namely the exponential, Erfc and Fokker-Planck filters when changing the filter strength. In order to measure oscillations, we extend the measures $\delta_{\text{E}}$ and $\delta_{\text{Var}}$ to two-dimensional spatial domains: If the distance of a discrete numerical density solution $\rho_{\Delta}:=(\rho_1,\cdots,\rho_{N_x})^T$ to a reference density $\rho_{\text{ex},\Delta}:=(\rho_{\text{ex},1},\cdots,\rho_{\text{ex},N_x})^T$ is given by $e_{\Delta} := \rho_{\text{ex},\Delta}-\rho_{\Delta}$, we are interested in determining the error of second derivatives, which is
\begin{align*}
\delta_{\text{E}}:=\sqrt{\sum_{j=1}^{N_x} \Delta x_j\left( \left( \partial_{xx} \mathbb{E}[e_j] \right)^2 + \left( \partial_{yy} \mathbb{E}[e_j] \right)^2 \right)}
\end{align*}
and
\begin{align*}
\delta_{\text{Var}}:=\sqrt{\sum_{j=1}^{N_x} \Delta x_j\left( \left( \partial_{xx} \text{Var}[e_j] \right)^2 + \left( \partial_{yy} \text{Var}[e_j] \right)^2 \right)}.
\end{align*}
With this error measure, we intend to capture the error behavior while measuring oscillations. Note that this measure only gives a coarse overview and minimal values in $\delta_{\text{E}}$ and $\delta_{\text{Var}}$ do not always guarantee obtaining the optimal result when looking at the full expectation and variance plot. The resulting values of $\delta_{\text{E}}$ and $\delta_{\text{Var}}$ for different filter strengths at a fixed filter order $\alpha=7$ (when using the EXP and erfc filters) are depicted in Figure~\ref{fig:ErrorsDelta}.  Here, we added the values of $\delta_{\text{E}}$ and $\delta_{\text{Var}}$ of the classical IPM method without filters as straight lines. Note that all filters require differently chosen filter strengths and we therefore study these filters on different grids for $\lambda$. For the order $7$ filters, these grids are chosen such that the effects of filtering with each filter is plausible, i.e., the solution does not equal the standard IPM solution and the effects of uncertainty is not completely dampened out. The Fokker-Planck filter in Figure~\ref{fig:ErrorsDeltasub3} is very sensitive with respect to the chosen filter strength and we can therefore observe the effect of filtering out the dependency on $\xi$ for large filter strengths. Note that in this case, the variance has a value of zero. Hence, $\delta_{\text{Var}}$ is simply the second derivative of the reference solution. Since the reference solution is smooth, this value is small compared to settings in which the filter is less dominant. However, the poor quality of the solution can be read from $\delta_{\text{E}}$ which heavily increases. At intermediate filter strengths, the Fokker-Planck filter does show improved results in terms of the chosen oscillation metric. It can be seen that both, the exponential filter in Figure~\ref{fig:ErrorsDeltasub1} and the Erfc filter in Figure~\ref{fig:ErrorsDeltasub2} mitigate oscillations compared to the classical IPM solution. However, the choice of the filtering strength affects the quality of the expected value and variance. While the optimal filter strength differs for the expected value and the variance, we observe that the minimum in the respective curves for $\delta_{\text{E}}$ and $\delta_{\text{Var}}$ is reached at similar points. Therefore, if we choose the filter such that we obtain a good representation of the expected value, we can expect to also observe a satisfactory variance approximation. 
\begin{figure}[h!]
\centering
	\begin{subfigure}{0.49\linewidth}
		\centering
		\includegraphics[scale=0.32]{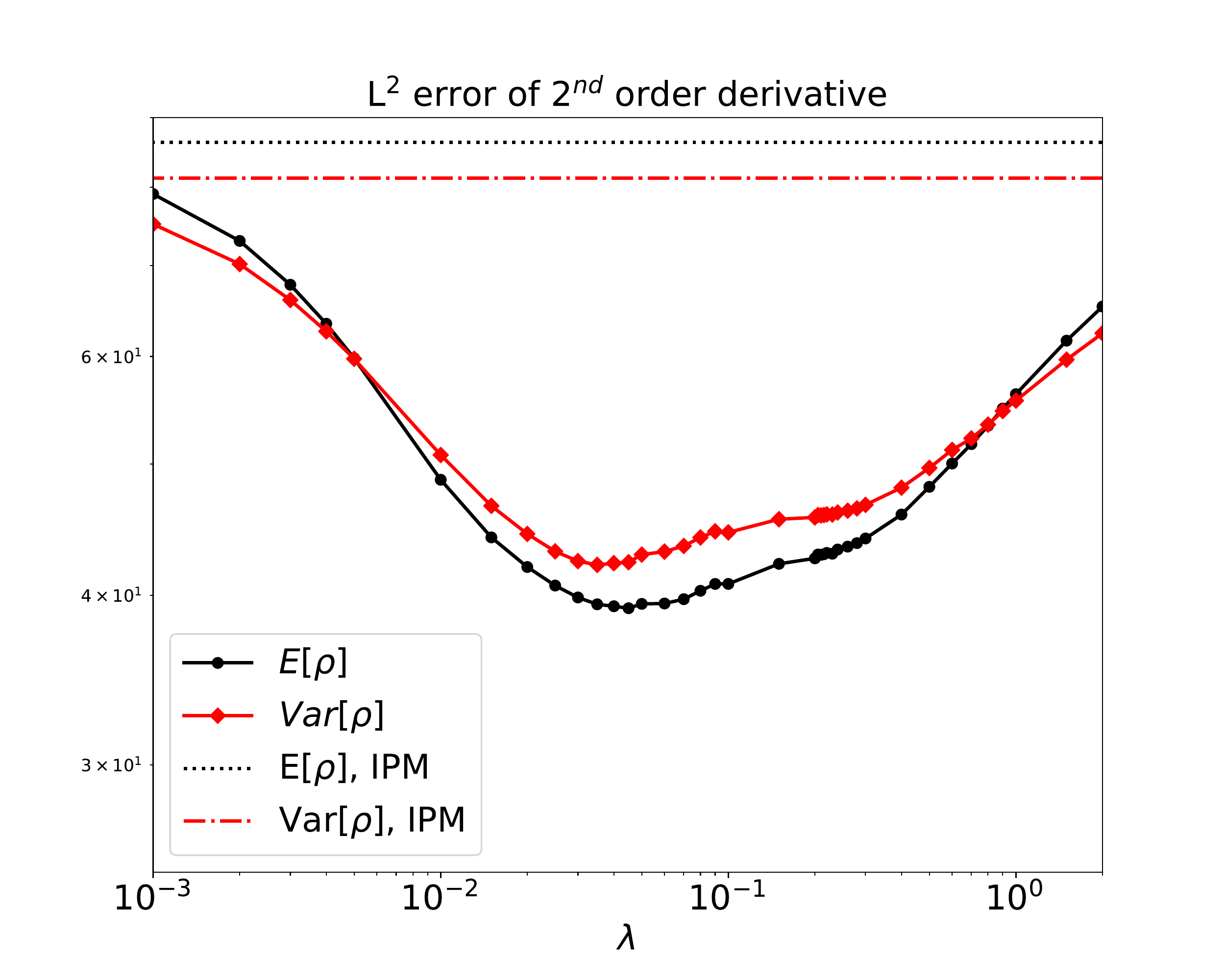}
		\caption{Exponential filter}
		\label{fig:ErrorsDeltasub1}
	\end{subfigure}
	\begin{subfigure}{0.49\linewidth}
		\centering
		\includegraphics[scale=0.32]{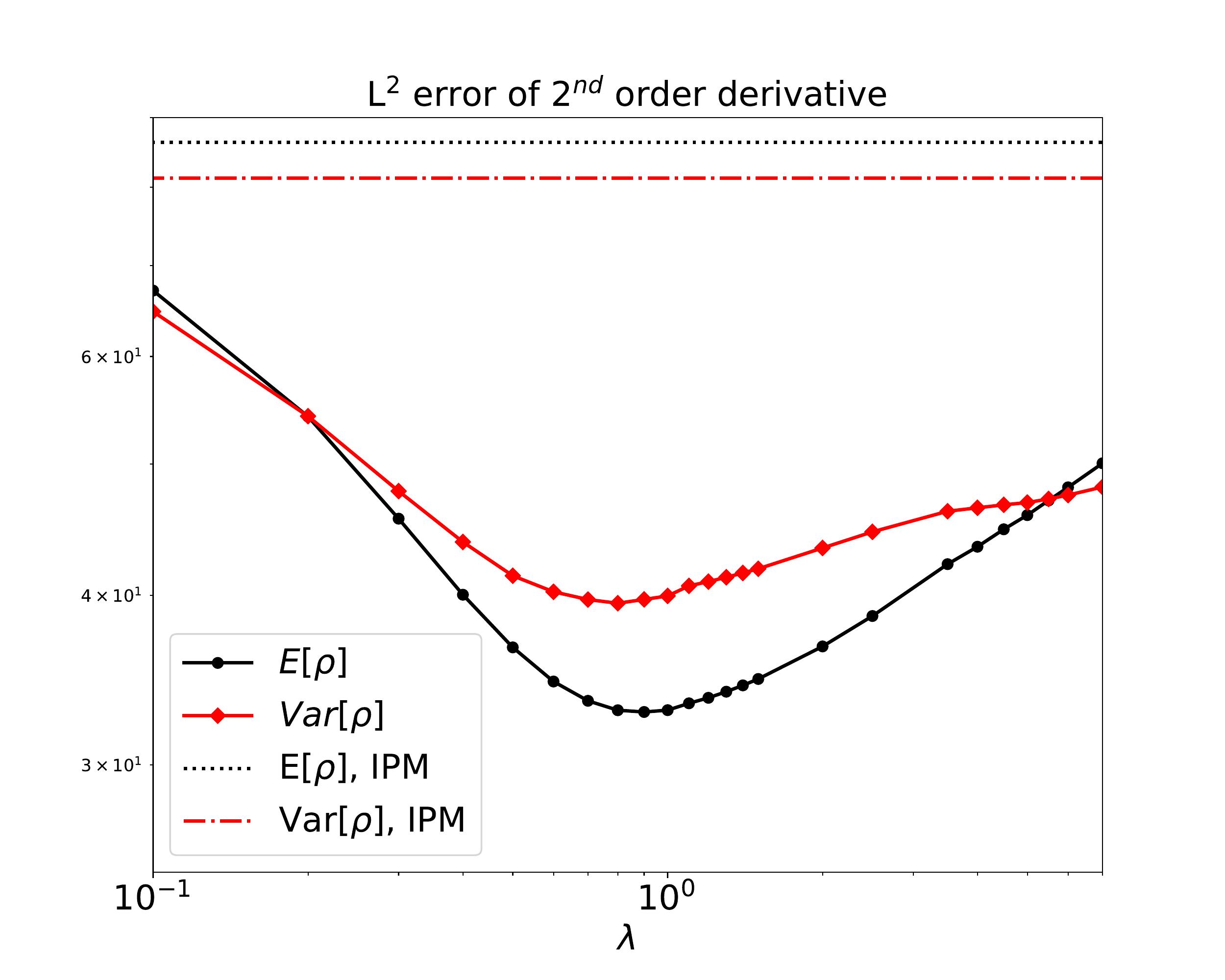}
		\caption{Erfc filter}
		\label{fig:ErrorsDeltasub2}
	\end{subfigure}
		\begin{subfigure}{0.49\linewidth}
		\centering
		\includegraphics[scale=0.32]{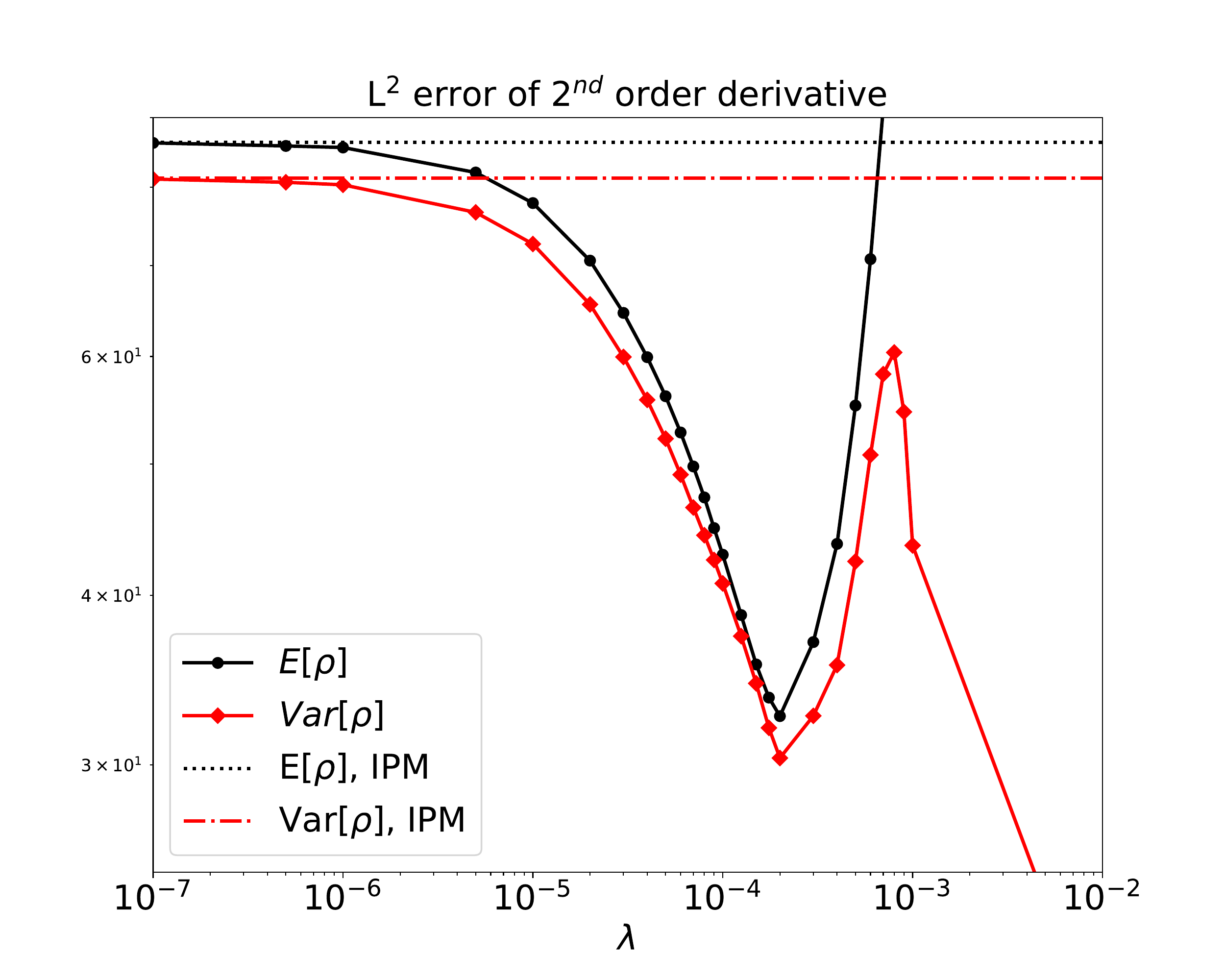}
		\caption{Fokker-Planck filter}
		\label{fig:ErrorsDeltasub3}
	\end{subfigure}
	\caption{$\delta_{\text{E}}$ and $\delta_{\text{Var}}$ for different filter strengths when using the exponential, Erfc and Fokker-Planck filter. The exponential and erfc filters have a regularization strength $\eta=10^{-7}$ and order $\alpha=7$. Values for $\delta_{\text{E}}$ and $\delta_{\text{Var}}$ without filters are added as a straight line.}
	\label{fig:ErrorsDelta}
\end{figure}

\section{Summary and Outlook}
\label{sec:outlook}
The main task of this work is to develop a method which mitigates oscillations that arise in the solutions of the IPM method. Applying a filter to the moments of IPM is challenging, since the filtered moments need to remain realizable and it can be shown that standard filters violate this property. In this work, we presented two different strategies to apply filtering to the IPM method. First, we proposed a new filter, which we called Fokker-Planck filter. It is based on an underlying Fokker-Planck equation, whose solutions obey the maximum--principle. We use this fact to guarantee realizability of filtered moment vectors. Furthermore, by investigating the filter on the kinetic level of the Euler equations, we are able to show realizability for applications in fluid dynamics. Second, we proposed to modify the IPM closure to be able to cope with non-realizable moments. This task is achieved by adding a regularization term to the IPM optimization problem. The regularization then allows choosing standard filters, which do not necessarily preserve realizability.
We conduct numerical experiments to investigate the effectiveness of both approaches. It turns out that the second approach, i.e., using a regularization strategy yields the best results. I.e., the resulting approximations of expected value and variance show a significant reduction of artifact which resulted from spurious oscillations. Unfortunately, the Fokker-Planck filter, despite being able to improve the solution at the shock, heavily dampens the variance at the rarefaction wave. 

Ideas to weaken this effect are the use of adaptive filter strengths, which 
yields the possibility to turn down the filter strength at the rarefaction wave 
and contact discontinuity. Note, that the classical IPM method without filters 
yields good results in theses regions, meaning that one could only turn on 
filtering at the shock position. In this work, we focused on the exponential, Erfc and 
Fokker-Planck filter and one should try out more filter functions. Note that 
the Lasso filter \cite{kusch2018filtered}, which adaptively picks the filter 
strength in an automated fashion, unfortunately led to heavily dampened 
variance results. 
A method which is closely related to IPM is the use of Roe variables \cite{pettersson2014stochastic,gerster2020entropies}. Applying the proposed filtering and regularization strategies to Roe variables could potentially improve this class of methods.
Furthermore, instead of regularizing the moment system, one could apply the regularization on the Euler equations as done in \cite{alldredge2018regularized}. Such a system allows the use of non-physical quantities such as negative densities and pressures. In this case, one is able to apply the standard stochastic-Galerkin method to the regularized Euler equations without having to preserve positivity of certain physical quantities.
The presented ideas are not restricted to uncertainty quantification as they can for 
example be used in kinetic theory and other research areas which require the 
construction of closures. Furthermore, one should further investigate the 
regularization, especially its effect on the run time. The observed run time 
speedup when increasing the regularization parameter holds the potential of 
speeding up standard IPM computations. However, one needs to keep in mind that 
the numerical experiments show strong solution manipulations when the 
regularization is chosen too big. 

\section*{Acknowledgments}
Funding: Jonas Kusch, Graham Alldredge and Martin Frank were supported by the 
German Research Foundation (DFG). Martin Frank was supported 
under grant FR 2841/6-1 and Graham Alldredge under AL 2030/1-1. Jonas Kusch was funded under Project-ID 258734477 - SFB 1173. Furthermore, 
Jonas Kusch would like to thank the German Academic Exchange Service (DAAD) for 
funding his research visit at the University of Notre Dame as well as the University of Notre Dame for hosting his stay during which large portions of this work were conducted.

\bibliographystyle{unsrt}  
\bibliography{references}

\appendix
\section{Sturm-Liouville equation of orthogonal polynomials}\label{app:orthoPoly}
In the following, we provide a short summary of the Sturm-Liouville equation, following \cite[Chapter~5]{powers2015mathematical}. The standard orthonorormal polynomials $P_i:\Theta\rightarrow\mathbb{R}$ fulfill a Sturm-Liouville equation. This equation takes the form $\mathcal{L} P_i = \mu_i P_i$, where $\mathcal{L}$ is a self-adjoint linear operator
\begin{align*}
    \mathcal{L}P_i = \frac{1}{r(x)}\frac{d}{d\xi}\left(p(x)\frac{d}{d\xi}+q(x)\right).
\end{align*}
I.e., the polynomials $P_i$ are eigenfunctions to the operator $\mathcal{L}$ with eigenvalues $\mu_i$. Furthermore, we have that
\begin{align*}
    \int_{\Theta} P_i(\xi) P_j(\xi) r(\xi)\,d\xi = \gamma_i \delta_{ij},
\end{align*}
where $\gamma_i$ denotes the weighted $L^2$-norm.
The \textit{Legendre polynomials} which are used throughout this work fulfill this property, where
\begin{align*}
    p(\xi) = 1-\xi^2, \quad r(\xi) = 1, \quad q(\xi) = 0, \quad \mu_i = -i(i+1).
\end{align*}
Further polynomials are
\begin{itemize}
    \item \textit{Chebyshev}: $p(\xi) = \sqrt{1-\xi^2}, \quad r(\xi) = \frac{1}{\sqrt{1-\xi^2}}, \quad q(\xi) = 0, \quad \mu_i = -i^2$,
    \item \textit{Hermite}: $p(\xi) = e^{-\xi^2}, \quad r(\xi) = e^{-\xi^2}, \quad q(\xi) = 0, \quad \mu_i = -2i$,
    \item \textit{Laguerre}: $p(\xi) = \xi e^{-\xi^2}, \quad r(\xi) = e^{-\xi^2}, \quad q(\xi) = 0, \quad \mu_i = -i$.
\end{itemize}
\end{document}